\numberwithin{equation}{section}
\newcommand{\I}{\mathbb{I}}
\newcommand{\eqdist}{\stackrel{d}{=}}
\newtheorem{theorem}{Theorem}
\newtheorem{corollary}{Corollary}
\newtheorem{proposition}{Proposition}
\newtheorem{lemma}{Lemma}
\newtheorem*{mainprob}{The Main Problem}
\theoremstyle{definition}
\newtheorem{example}{Example}
\newtheorem{definition}{Definition}
\newtheorem{remark}{Remark}
\author{Jaakko Lehtomaa}
\title{On asymptotic scales of independently stopped random sums}
\begin{document}

\maketitle

\begin{abstract}
We study randomly stopped sums via their asymptotic scales. First, finiteness of moments is considered. To generalise this study, asymptotic scales applicable to the class of all heavy-tailed random variables are used. The stopping is assumed to be independent of the underlying process, which is a random walk. 

The main result enables one to identify whether the asymptotic behaviour of a stopped sum is dominated by the increment, or the stopping variable. As a consequence of this result, new sufficient conditions for the moment determinacy of compounded sums are obtained.
\end{abstract}

\noindent \emph{Keywords}: Randomly stopped sum; Stieltjes moment problem; Heavy-tailed; Moment determinacy; Large deviations
\\
\noindent\emph{Mathematics Subject Classification (2010)}: 60E05; 60F10; 60G50; 44A60

\section{Introduction}

Let $(\Omega, \mathcal{F},P)$ be a probability space where all subsequent random variables are defined. Suppose $N$ is a random variable taking values in $\mathbb{N}=\{1,2,3\ldots\}$. The variable
\begin{equation}\label{perus}
S_N=\sum_{k=1}^N X_k
\end{equation}
is called a \emph{randomly stopped sum}. Here $(X_i)$ is a sequence of real valued random variables called \emph{increments}. If $(X_i)$ is an IID (independent and identically distributed) sequence, then the variable of Formula \eqref{perus} is called a \emph{stopped random walk}.

Randomly stopped sums are one of the most studied cases of randomly stopped processes, see e.g. \cite{gut2009stopped,gut1986converse}. Typically, one is interested to know how the tail of the variable $S_N$ is affected by tails of its increments and stopping variables. Identification of the dominant variable of the stopped sum has applications in e.g. insurance, where the compounded variable $S_N$ is used to model the aggregate loss of a company. In this setting, the aim is to find out if large losses are mainly caused by few big increments or unusually large amounts of small increments.

\begin{mainprob} When does the increment $X$ or the stopping $N$ alone dominate the asymptotic behaviour of the stopped random walk $S_N$?
\end{mainprob} 

Asymptotic form of the tail of $S_N$ has been studied extensively under different distributional assumptions. Specifically, the central topic has been to discover whether the tail behaviour of $P(S_N>x)$, as $x \to \infty$,  corresponds to that of 
\begin{enumerate}[I]
\item $E(N)P(X_1>x)$ \label{A} or
\item $P(E(X_1)N>x)$. \label{B}
\end{enumerate}
In \ref{A}, the increment $X$ is dominating while in \ref{B} the dominating variable is $N$. The case \ref{A} has been studied in \cite{schmidli1999compound, foss2007lower, denisov2008lower, denisov2008lower2}. The case \ref{B} seems to be less studied than case \ref{A}. Although, references for the latter can be found from \cite{denisov2010asymptotics, robert2008tails}. 

The main contribution of the present paper is to offer a simple method that allows one to identify, on a rough scale, which of the situations \ref{A} or \ref{B} is present. We begin with results about moments and expand the study to results concerning asymptotic scales. This leads to an intuitively satisfying result in Theorem \ref{theorem1}, where heavier increment $X$ leads to \ref{A} while heavier stopping $N$ leads to \ref{B}.

The result does not require detailed assumptions about the distribution classes of random variables. Furthermore, using rough scales it becomes possible to analyse situations where the dominant variable is only slightly heavier than the other variable. Recall that a random variable is called \emph{heavy-tailed} if no exponential moments exist and \emph{light-tailed} otherwise. It will be shown that the main result holds when one of the variables $X$ and $N$ is heavy- and the other light-tailed without any further assumptions. 

The second result of the paper considers the moment determinacy of random sums. It turns out that the asymptotic scale of the random sum is closely related to the moment determinacy of $S_N$. A random variable $X\geq 0$ with finite moments $E(X^k)$ of all orders $k\in \mathbb{N}$ is \emph{determined by its moments} if there is no other law besides the law of $X$,  $\mathcal{L}(X)$, having the same moment sequence. The topic of moment determinacy is widely studied, see e.g. \cite{gut2002moment, stoyanov2000krein, lin2009logarithmic, lin2002moment, stoyanov2013hardy2}. A defining topic of previous research has been the discovering of sufficient or necessary conditions for moment determinacy, see \cite{berg1995indeterminate, pakes2007structure, stoyanov2004stieltjes}. 

As a theoretical consequence of the proof technique developed for Theorem \ref{theorem1}, we derive sufficient conditions for the moment determinacy of $S_N$. These conditions can be verified using the tail functions of $X$ and $N$. The moment determinacy of random sums has been previously studied in e.g. \cite{lin2002moment}. So far, it seems that the stopping variable $N$ has been restricted to a narrow class of light-tailed distributions. This is why the present paper offers conditions that apply for a wide class of heavy-tailed distributions and require fewer assumptions than previous results.

\subsection{Structure of the paper}

Basic preliminary information is recalled and defined in Section \ref{prelims}. All of the results are presented in Section \ref{results}. For the convenience of the reader, explanations and applications illustrating the results are presented directly after the exposition of the results. The proofs of results are postponed to Section \ref{proofs}. Technical details omitted during the main text are presented as appendices in Section \ref{appendixsec}.
 
\subsection{Preliminaries and definitions}\label{prelims}

In \cite{oma2}, it is established that the heaviness of a random variable can be measured using a \emph{natural scale}. This is recalled in Lemma \ref{kertauslemma} below. A natural scale of the random variable $X$ approximates the growth speed of the \emph{hazard function} 
$$R_X(x):=-\log P(X>x), $$
where $:=$ denotes equality by definition. The key observation is that the scale function can be chosen to be concave for heavy-tailed random variables. Basic properties of concave functions are tacitly assumed to be known, but can be recalled from e.g. \cite{rockafellar1997convex}.

\begin{definition} A random variable $X$ is (right) \emph{heavy-tailed}, if $E(e^{sX})=\infty$ for all $s>0$. A random variable that is not heavy-tailed, is \emph{light-tailed}.
\end{definition}

If the random variable related to function $h$ or $R$ is clear from the context, the lower index is omitted. The same holds for a general member of an IID sequence. Most results are formulated for the right tails of random variables. To study left tails one can replace $X$ by $-X$. Lastly, for a real number $x$, set $x^+:=\max(0,x)$ and $x^-=\max(0,-x)$.

\begin{lemma}(Recalled from \cite{oma2})\label{kertauslemma} Suppose $X$ is non-negative and heavy-tailed.
\begin{enumerate}
\item \label{ehto1}There exists a concave function $h_X\colon [0,\infty )\to [0,\infty )$ such that $h_X(0)=0$, $h_X(x) \to \infty$, as $x \to \infty$ and
\begin{equation}\label{mimaar}
\liminf_{x \to \infty} \frac{ R(x)}{h_X(x)}=1.
\end{equation}
Such a function $h_X$ is called a natural scale of the random variable $X$.
\item \label{ehto2} For any continuous and increasing function $h \colon [0,\infty) \to [0,\infty)$ such that $h(x)\to \infty$, as $x \to \infty$, we have 
\begin{equation}\label{yhtmaar}
\I_h(X):=\liminf_{x \to \infty} \frac{ R(x)}{h(x)}=\sup\{s\geq 0: E(e^{sh(X)})<\infty\}.
\end{equation}
\item \label{ehto3}  For a natural scale $h_X$ it holds that
\begin{center}
$ \left\{ \begin{array}{ll}
        E(e^{sh_X(X)})<\infty, & \mbox{if $s<1$}\\
        E(e^{sh_X(X)})=\infty, & \mbox{if $s>1$}.\end{array} \right. $ 
\end{center}
\item \label{ehto4}Suppose concave functions $f_1,f_2\colon [0,\infty) \to [0,\infty)$ with $f_1(0)=f_2(0)=0$ and
\begin{center}
$ \left\{ \begin{array}{ll}
        E(e^{f_1(X)})=\infty \mbox{ and}\\
         E(e^{f_2(X)})<\infty.\end{array} \right. $ 
\end{center}
are given. Then, it is possible to choose natural concave scales $h_X$ and $h_X^*$ so that $f_1\geq h_X$ and $h_X^* \geq f_2$ while conditions of  \ref{ehto1} are satisfied.
\end{enumerate}
\end{lemma}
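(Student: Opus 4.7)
The plan is to establish (2) first, since it provides the bridge between moments and liminf ratios on which the remaining items rest. Starting from the layer cake identity
\begin{equation*}
E(e^{sh(X)}) = 1 + s\int_0^\infty e^{sv}P(h(X)>v)\,dv = 1 + s\int_0^\infty e^{sv - R(h^{-1}(v))}\,dv,
\end{equation*}
valid because $h$ is continuous and strictly increasing, convergence is read off from the sign of $sv - R(h^{-1}(v))$ for large $v$. If $s < \liminf_{x\to\infty} R(x)/h(x)$, the integrand is eventually at most $e^{-\varepsilon v}$ and the integral converges; if $s > \liminf_{x\to\infty} R(x)/h(x)$, a sequence $x_n\to\infty$ with $R(x_n) < (s-\varepsilon)h(x_n)$ combined with the crude bound $E(e^{sh(X)}) \geq e^{sh(x_n)} P(X>x_n) = e^{sh(x_n)-R(x_n)}$ forces divergence. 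This proves (2).

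For (1), apply (2) with $h(x)=x$: heavy-tailedness of $X$ is equivalent to $\liminf_{x\to\infty} R(x)/x = 0$. Using this, I would build $h_X$ as the piecewise-linear interpolant through points $(y_n, R(y_n))$ for a recursively chosen sequence $0 = y_0 < y_1 < \cdots \to \infty$ satisfying both (i) the chord slopes $(R(y_{n+1})-R(y_n))/(y_{n+1}-y_n)$ are strictly decreasing, which is achievable because $R(y)/y$ can be made arbitrarily small for large $y$, and (ii) $R(y_{n+1})/R(y_n) \to 1$, so that $\liminf R/h_X$ equals $1$ rather than something smaller. The first condition yields concavity of $h_X$; the second, together with monotonicity of $R$, gives for $x \in [y_n, y_{n+1}]$ the bound $R(x)/h_X(x) \geq R(y_n)/R(y_{n+1}) \to 1$, and the equalities $h_X(y_n) = R(y_n)$ give the matching upper bound, so $\liminf R/h_X = 1$. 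Simultaneously enforcing (i) and (ii) along a single sequence tending to infinity is the main technical obstacle and is the most delicate step of the argument.

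Item (3) follows at once from (1) and (2): $h_X$ is continuous, increasing, tends to infinity, and $\liminf R/h_X = 1$, so (2) converts this into the claimed dichotomy between $s<1$ and $s>1$.

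For item (4) I handle the two bounds separately. For $h_X^* \geq f_2$: (2) applied to $f_2$ yields $\mathbb{I}_{f_2}(X) \geq 1$, so the scaling $h_X^* := \mathbb{I}_{f_2}(X) \cdot f_2$ is a concave natural scale dominating $f_2$, with a minor perturbation handling the degenerate case $\mathbb{I}_{f_2}(X) = +\infty$ (where one can take $h_X^* := h_X + f_2$ for the $h_X$ from (1)). For $f_1 \geq h_X$: repeat the construction of (1) with the sampling sequence restricted to the set $\{y : R(y) \leq f_1(y)\}$, which is unbounded because (2) gives $\liminf R/f_1 \leq 1$. Then $h_X(y_n) = R(y_n) \leq f_1(y_n)$ at every sample point, and concavity of $f_1$ forces $f_1$ to dominate the chord joining any two points where its graph lies above $h_X$, whence $h_X \leq f_1$ on all of $[0,\infty)$.
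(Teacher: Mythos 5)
The paper itself offers no proof of this lemma --- it is recalled verbatim from \cite{oma2} --- so you are supplying an argument where the paper supplies only a citation. Your part (2) is the standard layer-cake/Chernoff argument and is sound (modulo using a generalised inverse when $h$ is not strictly increasing), and part (3) does follow immediately from (1) and (2). The genuine gap is in part (1), at precisely the step you flag as ``the most delicate'': conditions (i) and (ii) cannot in general be met simultaneously, because (ii) alone can be impossible. Take $R$ piecewise constant with $R(x)=4^k$ on $[a_k,a_{k+1})$, where $a_k$ grows fast enough that $\liminf_{x\to\infty} R(x)/x=0$, so $X$ is heavy-tailed. Any sequence $y_n\to\infty$ has, for each large $k$, a consecutive pair with $y_n<a_k\leq y_{n+1}$, hence $R(y_{n+1})/R(y_n)\geq 4$ infinitely often, and (ii) fails for every admissible sequence. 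This is not merely a crudeness of your lower bound: with nodes placed on the graph of $R$, the ratio $R(x)/h_X(x)$ really is close to $R(y_n)/R(y_{n+1})$ just to the left of $y_{n+1}$ whenever $R$ is flat on $[y_n,y_{n+1})$, so the interpolant through $(y_n,R(y_n))$ does \emph{not} satisfy $\liminf R/h_X=1$ in this example. The construction must instead pin $h_X(y_{n+1})$ to a value no larger than $\inf_{y\in[y_n,y_{n+1})}R(y)=R(y_n)$ (i.e.\ keep $h_X$ below the left-continuous version of $R$), so that $h_X\leq R$ on each block follows from monotonicity of $h_X$ alone, and then recover the matching inequality $\liminf R/h_X\leq 1$ at points just to the left of suitably chosen nodes rather than at the nodes themselves. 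That changes the bookkeeping nontrivially and is the actual content of the cited result.

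There is a second, smaller gap in part (4). For the bound $f_1\geq h_X$ you restrict the sampling to the set $\{y: R(y)\leq f_1(y)\}$ and assert it is unbounded because $\liminf R/f_1\leq 1$. That inference fails: $E(e^{f_1(X)})=\infty$ only forces $\liminf R/f_1\leq 1$, which is compatible with $R>f_1$ everywhere. For instance $f_1(x)=\sqrt{x}$ and $R(x)=\sqrt{x}+\log\log x$ give $E(e^{\sqrt{X}})=\int \frac{dx}{2\sqrt{x}\log x}+1=\infty$ while $\{R\leq f_1\}$ is bounded. In that boundary regime ($\liminf R/f_1=1$) the right move --- and the one the paper itself makes in Appendix A.1 when proving Remark 2 --- is to observe that $f_1$ is then already a natural scale, so one may take $h_X=f_1$ or $\min(f_1,h)$ for an existing scale $h$; your sampling-restriction idea should be reserved for the case $\liminf R/f_1<1$, where $\{R\leq(1-\delta)f_1\}$ is genuinely unbounded. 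Your treatment of $h_X^*\geq f_2$ via $h_X^*=\I_{f_2}(X)\cdot f_2$, with the additive fix $h_X+f_2$ in the degenerate case $\I_{f_2}(X)=\infty$, does work, although the verification that $\liminf R/(h_X+f_2)=1$ (both directions) deserves a line of its own.
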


\begin{definition}
Let $X$ be a heavy-tailed random variable. Define
\begin{equation}\label{classmaar}
\mathcal{I}(X):=\{h_X:\mbox{$h_X$ is a natural scale of $X$ with $h_X(0)=0$ }\}.
\end{equation}
\end{definition}
The set $\mathcal{I}(X)$ is non-empty for any heavy-tailed random variable by part \ref{kohta1} of Lemma \ref{kertauslemma}. In addition,  the following properties hold.
\begin{lemma}\label{ekalemma} The following properties hold for natural scales. 
\begin{enumerate} 
\item \label{l2part1}
If $h \in \mathcal{I}(X)$ and $g\sim h$, then 
$$\liminf_{x \to \infty} \frac{R(X)}{g(x)}=1. $$
\item \label{l2part2}
If $h,r \in \mathcal{I}(X)$, then
$$\liminf_{x \to \infty} \frac{h(x)}{r(x)}\leq 1 \mbox{ and } \liminf_{x \to \infty} \frac{r(x)}{h(x)}\leq 1.$$
\item \label{l2part3}
If $c>0$ and $h_X\in \mathcal{I}(X)$, then $g\in \mathcal{I}(cX)$, where $g(x)=h_X(cx)$.
\end{enumerate}
\begin{proof}Clear from definitions and Lemma \ref{kertauslemma}.
\end{proof}
\end{lemma}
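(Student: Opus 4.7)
The plan is to derive each of the three assertions directly from part \ref{ehto1} of Lemma \ref{kertauslemma}, which guarantees that every $h \in \mathcal{I}(X)$ satisfies $\liminf_{x \to \infty} R(x)/h(x) = 1$, combined with elementary properties of the liminf.

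For part \ref{l2part1}, I would write
$$\frac{R(x)}{g(x)} = \frac{R(x)}{h(x)} \cdot \frac{h(x)}{g(x)}, $$
and observe that the hypothesis $g \sim h$ forces the second factor to tend to $1$; hence the liminf of the product equals $\liminf_{x \to \infty} R(x)/h(x) = 1$.

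For part \ref{l2part2}, I would proceed by contradiction. If $\liminf_{x \to \infty} h(x)/r(x) > 1$, then there exist $\alpha > 1$ and $x_0$ such that $h(x) \geq \alpha r(x)$ for all $x \geq x_0$; inverting and multiplying by $R(x) \geq 0$ yields $R(x)/r(x) \geq \alpha R(x)/h(x)$ on that range. Passing to the liminf and using that $r$ is itself a natural scale gives $1 \geq \alpha$, contradicting $\alpha > 1$. The symmetric inequality follows by exchanging the roles of $h$ and $r$.

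For part \ref{l2part3}, the central observation is the scaling identity $R_{cX}(x) = -\log P(X > x/c) = R_X(x/c)$. Combining this with a matching change of variable in the liminf reduces $\liminf_{x \to \infty} R_{cX}(x)/g(x)$ to $\liminf_{y \to \infty} R_X(y)/h_X(y) = 1$, while the defining properties of a natural scale---concavity, vanishing at $0$, and divergence to infinity---are inherited directly from $h_X$, so part \ref{ehto1} of Lemma \ref{kertauslemma} applies to $cX$ with scale $g$. The only step that requires any real care is the contradiction in part \ref{l2part2}: one must track the direction of inequality when inverting $h \geq \alpha r$ and exploit the homogeneity of the liminf under multiplication by the positive constant $\alpha$. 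Otherwise the argument is the routine bookkeeping that the author's one-line ``Clear from definitions'' remark suggests.
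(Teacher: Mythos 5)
Your treatments of parts \ref{l2part1} and \ref{l2part2} are correct and amount to exactly the bookkeeping the paper's one-line ``clear from definitions'' proof suppresses: in part \ref{l2part1} the factor $h(x)/g(x)\to 1$ lets the liminf pass through the product, and in part \ref{l2part2} the inequality $R(x)/r(x)\ge \alpha R(x)/h(x)$ (valid eventually, since $R\ge 0$ and the scales are eventually positive) gives $1=\liminf R/r\ge\alpha\liminf R/h=\alpha>1$, a contradiction. Both arguments are sound as written.

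Part \ref{l2part3} is where there is a genuine gap. With $g(x)=h_X(cx)$ as in the statement, the quantity you must show has liminf $1$ is $R_{cX}(x)/g(x)=R_X(x/c)/h_X(cx)$, and no change of variables reduces this to $R_X(y)/h_X(y)$: putting $y=x/c$ yields $R_X(y)/h_X(c^2y)$, whose liminf is in general not $1$. Concretely, take $P(X>x)=e^{-\sqrt{x}}$ and $h_X(x)=\sqrt{x}$; for $c=4$ one gets $R_{4X}(x)/h_X(4x)=\bigl(\sqrt{x}/2\bigr)/\bigl(2\sqrt{x}\bigr)=1/4$, so this $g$ is \emph{not} a natural scale of $4X$. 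The scaling identity $R_{cX}(x)=R_X(x/c)$ that you correctly record shows that the scale of $cX$ inherited from $h_X$ is $x\mapsto h_X(x/c)$, not $x\mapsto h_X(cx)$; for that choice your substitution does close up, and concavity, vanishing at $0$ and divergence are inherited as you say. So the statement of part \ref{l2part3} is itself off by the inversion $c\leftrightarrow 1/c$ (equivalently, $g(x)=h_X(cx)$ is a natural scale of $X/c$), and your proof, which asserts that the substitution ``matches,'' papers over precisely this point: as stated the claim is false, and the reduction you invoke cannot be carried out. You should either prove the corrected claim $g(x)=h_X(x/c)\in\mathcal{I}(cX)$ or flag the discrepancy explicitly, since the form of the scale of $E(X)N$ matters wherever part \ref{l2part3} is invoked later in the paper.
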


\begin{remark}\label{valiremark} Is is possible to find a single function $h_X\in \mathcal{I}(X)$ that satisfies $f_1\geq h_X \geq f_2$ in Lemma \ref{kertauslemma} part \ref{ehto4}. Details can be found from Appendix \ref{remark1app} below.
\end{remark}

\section{Results}\label{results}

We make the following assumptions throughout the rest of the Section \ref{results}:
\begin{enumerate}[I]
\item The sequence of increments $(X_i)$ is an IID-sequence\label{ol1}
\item The stopping variable $N$ is independent of $(X_i)$.\label{ol2}
\end{enumerate}
Under assumptions \ref{ol1}-\ref{ol2} the variable $S_N$ of Equation \eqref{perus} is an independently stopped random walk.

\subsection{Moments of randomly stopped sums}\label{moments}
The \emph{moment index} of a random variable $X$ is defined by the formula
\begin{equation}\label{momenttiindeksi}
\I(X):=\sup\{s\geq 0: E((X^+)^s)<\infty\}\in [0,\infty].
\end{equation}
A small moment index signifies high risk in the sense of a heavy (right) tail. In Proposition \ref{momentprop}, the moment index of $S_N$ is sought. Although parts of the proposition are already know from \cite{gut2009stopped,gut1986converse}, Proposition \ref{momentprop} alleviates requirements on the integrability further. In addition, Formula \eqref{momtulos2} provides both upper and lower bounds for the case of unbounded expectations.

\begin{proposition}\label{momentprop} Suppose assumptions \ref{ol1}-\ref{ol2} are valid. Assume $S_n \to \infty$ almost surely. If at least one of the expectations $E(|X|)$ or $E(N)$ is finite, then
\begin{equation}\label{momtulos}
\I(S_N)=\min(\I(X),\I(N)).
\end{equation}
If $E(|X|)=E(N)=\infty$, it holds that
\begin{equation}\label{momtulos2}
\I(X) \I(N)\leq \I(S_N)\leq \min(\I(X),\I(N)). 
\end{equation}
\end{proposition}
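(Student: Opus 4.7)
The plan is to establish the upper bound $\I(S_N)\le\min(\I(X),\I(N))$ via tail minorants and the lower bound via moment inequalities applied to $T_n:=\sum_{i=1}^n X_i^+\ge (S_n)^+$, separately for the case when at least one expectation is finite and for the fully symmetric case.

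\textbf{Upper bound.} The inequality $\I(S_N)\le\I(X)$ holds unconditionally: since $S_n\to\infty$ a.s.\ one picks $n_0$ with $P(N=n_0)>0$ and $M>0$ with $P(S_{n_0-1}>-M)\ge\tfrac12$, giving
\begin{equation*}
P(S_N>x)\ \ge\ \tfrac12\,P(N=n_0)\,P(X>x+M),
\end{equation*}
which transfers infinite $s$-moments from $X^+$ to $S_N^+$. For $\I(S_N)\le\I(N)$, the case $E(|X|)<\infty$ uses SLLN: with $\mu=E(X)>0$ (forced by $S_n\to\infty$), the event $A=\{S_n>\mu n/2\ \forall n\ge n_0\}$ has $P(A)>0$ and is independent of $N$, so $P(S_N>x)\ge P(A)\,P(N>2x/\mu)$. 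If $E(|X|)=\infty$ but $E(N)<\infty$, then $\I(X)\le 1\le\I(N)$ and the bound reduces to the first one. In the remaining subcase $E(X^+)=\infty$ is forced (else $S_n/n\to-\infty$), and either $S_n/n\to\infty$ (when $E(X^-)<\infty$) so the SLLN argument goes through, or a quantitative version of $S_n\to\infty$ in probability combined with the one-big-jump lower bound $P(S_n>x)\gtrsim nP(X>x+M)$ suffices.

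\textbf{Lower bound via moment inequalities.} Fix $s<\min(\I(X),\I(N))$ and use $(S_n)^+\le T_n$. For $s\ge 1$, the power-mean inequality gives $T_n^s\le n^{s-1}\sum(X_i^+)^s$, so $E((S_N^+)^s)\le E(N^s)E((X^+)^s)<\infty$. For $s<1$ and $E(N)<\infty$, subadditivity $T_n^s\le\sum(X_i^+)^s$ gives $E((S_N^+)^s)\le E(N)E((X^+)^s)<\infty$. For $s<1$ and $E(|X|)<\infty$ (so $E(X^+)<\infty$), concavity of $t\mapsto t^s$ (Jensen) yields $E(T_n^s)\le(E(T_n))^s=n^s(E(X^+))^s$ and hence $E((S_N^+)^s)\le E(N^s)(E(X^+))^s<\infty$. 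This exhausts the cases where at least one expectation is finite.

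\textbf{Product lower bound and main obstacle.} When $E(|X|)=E(N)=\infty$, both indices are at most $1$; given $s<\I(X)\I(N)$, choose $\alpha<\I(X)$ with $s/\alpha<\I(N)$. Markov gives $P(X^+>t)\le Ct^{-\alpha}$; truncating each $X_i^+$ at level $x$ and applying Markov to the truncated sum plus a union bound for the large-jump event yields $P(T_n>x)\le Cnx^{-\alpha}$ for all $x>0$, and splitting $E(T_n^s)=s\int_0^\infty x^{s-1}P(T_n>x)dx$ at $x=(Cn)^{1/\alpha}$ produces the sharp bound $E(T_n^s)\le Cn^{s/\alpha}$. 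Therefore $E((S_N^+)^s)\le CE(N^{s/\alpha})<\infty$. The main obstacle is exactly this refined moment bound: the crude inequality $T_n^s\le\sum(X_i^+)^s$ gives only $E(T_N^s)\le E(N)E((X^+)^s)$, which is infinite in the product case, so the truncation-plus-Markov step is the essential technical input needed to surpass $\min(\I(X),\I(N))$ and reach the product lower bound.
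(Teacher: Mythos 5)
Your proof is essentially correct and, for the case where at least one expectation is finite, follows the same skeleton as the paper's: the upper bound via a tail minorant involving one summand, the bound $\I(S_N)\le\I(N)$ via a uniform-in-$k$ lower bound on $P(S_k>ak)$ from the law of large numbers, and the lower bound via subadditivity ($s<1$, $E(N)<\infty$), Jensen ($s<1$, $E(X^+)<\infty$) and the power-mean inequality ($s\ge 1$, where the paper uses Minkowski to the same effect). For the product lower bound $\I(S_N)\ge\I(X)\I(N)$ you take a genuinely different route: Markov plus truncation at level $x$ to get $P(T_n>x)\le Cnx^{-\alpha}$ and then integration of the tail, whereas the paper composes the subadditivity of $x\mapsto x^{q_1}$ (with $q_1=\I(X)-\epsilon\le 1$) with Jensen at the exponent $q_2s<1$, obtaining $E\bigl((S_N^+)^{q_1q_2s}\bigr)\le E\bigl((X^+)^{q_1}\bigr)^{q_2s}E(N^{q_2s})$ directly. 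Both yield exactly the same bound; the paper's version is shorter and needs no tail estimates, while yours is more quantitative and would also deliver explicit moment bounds $E(T_n^s)\le Cn^{s/\alpha}$.

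The one genuine weak spot is the upper bound $\I(S_N)\le\I(N)$ in the sub-case $E(|X|)=E(N)=\infty$ with $E(X^-)=\infty$. Your proposed fallback -- ``a quantitative version of $S_n\to\infty$ in probability combined with the one-big-jump lower bound $P(S_n>x)\gtrsim nP(X>x+M)$'' -- cannot close this case: the one-big-jump estimate controls $P(S_n>x)$ in terms of the tail of $X$, so it can only reproduce $\I(S_N)\le\I(X)$ and says nothing about $\I(N)$. What is needed is a constant $C>0$ and $a>0$ with $P(S_k>ak)\ge C$ for all $k$, so that $E((S_N^+)^s)\ge a^sC\,E(N^s)$. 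The paper obtains this by invoking the known fact (cited from Kesten--Maller) that $S_n\to\infty$ almost surely together with $E(|X|)=\infty$ forces $S_n/n\to\infty$ almost surely, which covers the case $E(X^-)=\infty$ as well as your first branch. With that citation substituted for your fallback, the argument is complete.
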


\begin{remark} Condition $S_n\to \infty$ is valid when $E(X)>0$. If $X$ is not integrable, the situation is more delicate. This is discussed in Section \ref{positivedrift}. 
\end{remark}

In the general case, where no assumption is made about the drift of the underlying random walk, Proposition \ref{momtulos} enables the following corollary.

\begin{corollary}\label{momcor} Suppose $E(|X|)<\infty$ and $\I(X)\leq \I(N)$. Then 
\begin{equation}
\I(S_N)=\I(X).
\end{equation}
\end{corollary}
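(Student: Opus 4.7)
The corollary drops the drift condition $S_n \to \infty$ a.s.\ required by Proposition \ref{momentprop}, so my plan is to reduce to the drifted case by a deterministic shift of the increments, apply the proposition to obtain the lower bound on $\I(S_N)$, and close off the matching upper bound by a direct tail comparison.

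Fix $c > \max(0, -E(X))$ and set $Y_i := X_i + c$, so that $Y := Y_1$ satisfies $E(Y) > 0$. Then $S_n^Y := \sum_{i=1}^n Y_i \to \infty$ a.s.\ by the strong law, and $E(|Y|) < \infty$. Since $x \mapsto x^+$ is $1$-Lipschitz, $|Y^+ - X^+| \leq c$, giving $\I(Y) = \I(X)$. Applying Proposition \ref{momentprop} to the shifted walk yields
\[
\I(S_N^Y) = \min(\I(Y), \I(N)) = \I(X),
\]
where the final equality uses the hypothesis $\I(X) \leq \I(N)$. Since $cN \geq 0$, we have $S_N = S_N^Y - cN \leq S_N^Y$ pointwise, so $S_N^+ \leq (S_N^Y)^+$ and therefore $\I(S_N) \geq \I(S_N^Y) = \I(X)$.

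For the matching upper bound, pick $n_0 \in \mathbb{N}$ with $p_0 := P(N = n_0) > 0$ and $K > 0$ with $\delta := P(X > -K) > 0$. Independence of $N$ from $(X_i)$ together with independence within the IID sequence gives
\[
P(S_N > x) \geq p_0\, \delta^{n_0 - 1}\, P(X > x + (n_0 - 1)K),
\]
so the right tail of $S_N$ dominates a constant multiple of the right tail of $X$ up to a bounded deterministic shift. Since moment indices are invariant under such shifts, this forces $\I(S_N) \leq \I(X)$, completing the proof.

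The step requiring the most care is transferring integrability from $S_N^Y$ back to $S_N$. The one-sided inequality $S_N \leq S_N^Y$ is just enough to yield the lower bound on $\I(S_N)$; the reverse bound $S_N^Y \leq S_N + cN$ would only produce $\I(S_N^Y) \geq \min(\I(S_N), \I(N))$, which by itself does not pin down $\I(S_N)$. This is why the elementary tail comparison in the third paragraph is indispensable for closing the argument.
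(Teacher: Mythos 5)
Your proof is correct and follows essentially the same route as the paper: both arguments shift the increments by a constant to manufacture a positive drift, apply Proposition \ref{momentprop} to the shifted sum, and transfer the lower bound back via the pointwise inequality $S_N \leq S_N^Y$. The only difference is in the upper bound, where the paper introduces a second, negatively drifted sum $S^b_N$ and invokes the comparison \eqref{trivial} on the event $\{N=1\}$, whereas you compare tails directly on an event $\{N=n_0\}$ with $P(N=n_0)>0$ -- a minor repackaging that has the small advantage of not presupposing $P(N=1)>0$.
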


The interpretation of Corollary \ref{momcor} is immediate: At the level of moments, the stopped random walk can exhibit exotic behaviour only when the stopping variable has heavier tail than the increment.

\subsection{Scales of randomly stopped sums}\label{scales}

Here, the results of Section \ref{moments} are further generalised.

\begin{theorem}\label{theorem1}Suppose assumptions \ref{ol1}-\ref{ol2} are valid. Let $X$ and $N$ be heavy-tailed random variables. Assume $0<E(X)<\infty$.
\begin{enumerate}
\item \label{kohta1} Suppose $E((X^+)^{1+\delta})<\infty$ for some $\delta>0$. Let $h_X$ be a natural scale of $X$ such that $h_X(x)\geq (1+\delta)\log x$ for all large enough $x$. If
\begin{equation}\label{th1eq1}
 \liminf_{x \to \infty} \frac{h_{N}(c_1 x)}{h_{X}(x)}\in [1,\infty]
\end{equation}
holds for some natural scale $h_N$ of $N$ and some $c_1>E(X)$, then 
$$h_X \in \mathcal{I}(S_N).$$

\item \label{kohta2} Suppose $E(N^{1+\delta})<\infty$ for some $\delta>0$. Let $h_N$ be a natural scale of $N$ such that $h_N(x)\geq (1+\delta)\log x$ for all large enough $x$. If
\begin{equation}\label{th1eq2}
\liminf_{x \to \infty} \frac{h_X(x)}{h_{ N}(E(X)x)}\in [1,\infty] ,
\end{equation}
holds for some natural scale $h_X$ of $X$ then
$$h_{E(X) N}\in \mathcal{I}(S_N). $$
\end{enumerate}
\end{theorem}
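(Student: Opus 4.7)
The plan is to verify the defining relation $\liminf_{x \to \infty} R_{S_N}(x)/h_X(x) = 1$; the other ingredients of $h_X$ being a natural scale of $S_N$ (concavity, $h_X(0) = 0$, $h_X(x) \to \infty$) are inherited from $h_X \in \mathcal{I}(X)$. I would establish the two one-sided inequalities separately.

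For the upper bound $\liminf \leq 1$, fix $n_0 \in \mathbb{N}$ with $P(N = n_0) > 0$ together with $y_0 > 0$ such that $P(X > -y_0) > 0$ (possible since $E(X)$ is finite). Independence of the $X_i$'s and $N$ gives
\[
P(S_N > x) \geq P(N = n_0)\, P(X_1 > x + (n_0 - 1) y_0)\, P(X > -y_0)^{n_0 - 1},
\]
so $R_{S_N}(x) \leq R_X(x + K) + C$ for suitable constants $K, C$. Since $X$ is heavy-tailed, $h_X$ is subadditive and sublinear, which forces $h_X(x + K)/h_X(x) \to 1$. Combining with $\liminf R_X/h_X = 1$ yields $\liminf R_{S_N}/h_X \leq 1$.

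For the lower bound $\liminf \geq 1$, I reduce to $X \geq 0$ using $P(S_N > x) \leq P(\sum_{i=1}^{N} X_i^{+} > x)$, since $X^+$ has the same positive tail and hence the same natural scale. Inserting $h_X(x) \geq (1 + \delta) \log x$ into the hypothesis $h_N(c_1 x) \geq (1-\epsilon) h_X(x)$ yields $h_N(y) \geq (1 + \delta')\log y$ for some $\delta' \in (0, \delta]$ and all large $y$; Lemma \ref{kertauslemma} part \ref{ehto2} then gives $E(N^p) < \infty$ for every $p < 1 + \delta'$, and in particular $E(N) < \infty$. Next I fix a small $\gamma \in (0, 1)$ and set $\tilde X_i := X_i \mathbf{1}_{\{X_i \leq (1-\gamma) x\}}$ and $\tilde S_N := \sum_{i=1}^{N} \tilde X_i$, and use the one-big-jump decomposition
\[
P(S_N > x) \leq P\bigl(\max_{i \leq N} X_i > (1-\gamma) x\bigr) + P(\tilde S_N > x).
\]
The first term is bounded by $E(N) P(X > (1-\gamma) x) \leq E(N) \exp(-(1-\epsilon)(1-\gamma) h_X(x))$, using $h_X((1-\gamma) x) \geq (1-\gamma) h_X(x)$ (concavity with $h_X(0) = 0$). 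For the second term, a von Bahr--Esseen or Rosenthal type moment inequality applied to the bounded iid centered variables $\tilde X_i - E(\tilde X_i)$, combined with $E(N^p) < \infty$, gives $E(\tilde S_N^p) = O(1)$ for any $p \in (1, 1 + \delta')$; Markov's inequality together with $h_X(x) \geq (1+\delta)\log x$ then produces $P(\tilde S_N > x) \leq C \exp(-(p/(1+\delta)) h_X(x))$.

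Letting $\epsilon, \gamma \to 0$ and $p \to 1 + \delta$ makes both exponents converge to $1$, establishing $\liminf R_{S_N}/h_X \geq 1$ and hence Part \ref{kohta1}. Part \ref{kohta2} follows by a symmetric argument: the roles of $X$ and $N$ are exchanged, the dominance hypothesis now provides finiteness of moments of $X$ slightly above $1 + \delta$, $\{S_N > x\}$ is split around $\{N > x/(E(X)(1-\gamma))\}$, and the event where $N$ is small but $S_N$ is large is controlled by a Nagaev-type large-deviation bound for $S_n - n E(X)$. The main technical challenge in both parts is to push the moment exponent $p$ arbitrarily close to $1 + \delta$ while keeping the relevant random-sum moment finite and the corresponding concentration bound sharp; the combined strength of $h_X \geq (1 + \delta)\log x$ and the dominance condition on $h_N$ is exactly what makes this possible.
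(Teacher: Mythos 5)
Your overall strategy (establishing $\liminf R_{S_N}/h_X=1$ by two one-sided tail bounds) is a legitimate reformulation, and your upper-bound half ($\liminf\le 1$) is fine --- indeed slightly more careful than the paper's, which tacitly uses $P(N=1)>0$. The gap is in the lower-bound half, which is the heart of the theorem. From a Rosenthal/von Bahr--Esseen bound and Markov's inequality you obtain $P(\tilde S_N>x)\le Cx^{-p}=C e^{-p\log x}$, and the hypothesis $h_X(x)\ge(1+\delta)\log x$ gives $\log x\le h_X(x)/(1+\delta)$; hence
\begin{equation*}
C e^{-p\log x}\;\ge\;C e^{-\frac{p}{1+\delta}h_X(x)},
\end{equation*}
so the bound $P(\tilde S_N>x)\le C\exp(-\tfrac{p}{1+\delta}h_X(x))$ that you assert points in the wrong direction. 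The condition $h_X(x)\ge(1+\delta)\log x$ is only a \emph{lower} bound on $h_X$; the scale may well satisfy $h_X(x)/\log x\to\infty$ (e.g.\ $h_X(x)=\sqrt x$ for a Weibull-type increment), in which case any polynomial bound $x^{-p}$ is far too weak to dominate $e^{-(1-o(1))h_X(x)}$. For the same reason, extracting only the polynomial moments $E(N^p)<\infty$ from the dominance hypothesis \eqref{th1eq1} discards exactly the information that is needed: \eqref{th1eq1} actually yields the exponential-in-scale integrability $E(e^{(1-\epsilon)h_X(c_1N)})<\infty$.

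What is missing is a tool that controls an exponential moment of the scale of the partial sums, namely Lemma \ref{dlemma} (Lemma 3 of Denisov--Foss--Korshunov): for nondecreasing, eventually concave $h$ with $h(x)=o(x)$, $h(x)\ge\log x$ and $E(e^{h(\xi)})<\infty$, one has $E(e^{h(S_n)})\le K(c)e^{h(nc)}$ for any $c>E(\xi)$. The concavity and subadditivity of $h$ are what make several moderate jumps as expensive as one large jump, and this is precisely the point your truncation-plus-moment argument cannot reach; note also that with two truncated jumps of size $\gamma' x$ the cost is only about $2\gamma' h_X(x)$, which is \emph{not} $\ge(1-o(1))h_X(x)$ for small $\gamma'$, so even the exponents in your one-big-jump decomposition do not close. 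The paper's proof applies Lemma \ref{dlemma} with $h=(1-\eta)h_X$, $\eta<\delta/(1+\delta)$, sums over $N$ using $E(e^{(1-\eta)h_X(c_1N)})<\infty$, and concludes $\I_{h_X}(S_N)\ge 1$ via Lemma \ref{kertauslemma} part \ref{ehto2}; the analogous objection applies to your sketch of part \ref{kohta2}, where a Nagaev-type polynomial bound again cannot match $e^{-(1-o(1))h_{E(X)N}(x)}$ when $h_N$ grows superlogarithmically.
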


\begin{corollary}\label{corollary1} Suppose $X\geq 0$ almost surely. Assume that $h_X$ satisfies conditions of Theorem \ref{theorem1} part \ref{kohta1}. If it also holds that
\begin{equation}\label{cor1eq1}
\lim_{x \to \infty} \frac{ R_X(x)}{h_X(x)}=1,
\end{equation}
then \eqref{th1eq1} implies $R_{S_N}(x)\sim h_X(x)$, as $x \to \infty$.

Similarly, if $h_X$ satisfies conditions of Theorem \ref{theorem1} part \ref{kohta2} and 
\begin{equation}\label{cor1eq2}
\lim_{x \to \infty} \frac{ R_{E(x)N}(x)}{h_{E(X)N}(x)}=1,
\end{equation}
then \eqref{th1eq2} implies $R_{S_N}(x)\sim h_{E(X)N}(x)$, as $x \to \infty$.
\end{corollary}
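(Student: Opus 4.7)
The plan is to upgrade the conclusions of Theorem \ref{theorem1}---which furnish $h\in\mathcal{I}(S_N)$ and hence $\liminf_{x\to\infty} R_{S_N}(x)/h(x)=1$ for the appropriate natural scale $h$---to full asymptotic equivalences $R_{S_N}(x)\sim h(x)$ by producing matching upper bounds $\limsup_{x\to\infty}R_{S_N}(x)/h(x)\le 1$ in each of the two cases.

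For the first assertion the argument is essentially one line. Since $X\ge 0$ almost surely and $N\in\mathbb{N}$, we have $S_N\ge X_1$ pathwise, so $P(S_N>x)\ge P(X_1>x)$ and therefore $R_{S_N}(x)\le R_X(x)$. Dividing by $h_X(x)$ and invoking the hypothesis $R_X(x)/h_X(x)\to 1$ yields $\limsup_{x\to\infty} R_{S_N}(x)/h_X(x)\le 1$, which combined with Theorem \ref{theorem1}\ref{kohta1} gives $R_{S_N}(x)\sim h_X(x)$.

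For the second assertion, a matching lower bound on $P(S_N>x)$ is extracted from the weak law of large numbers. Fix $\varepsilon\in(0,E(X))$ and pick $n^{*}=n^{*}(\varepsilon)$ so that $P(S_n>(E(X)-\varepsilon)n)\ge 1-\varepsilon$ for every $n\ge n^{*}$. For $x$ large enough, every integer $n\ge\lceil x/(E(X)-\varepsilon)\rceil$ satisfies both $n\ge n^{*}$ and $(E(X)-\varepsilon)n\ge x$, so $P(S_n>x)\ge 1-\varepsilon$; conditioning on $N$ yields
\begin{equation*}
P(S_N>x)\ge(1-\varepsilon)\,P\bigl(N> x/(E(X)-\varepsilon)\bigr),
\end{equation*}
and hence $R_{S_N}(x)\le R_N\bigl(x/(E(X)-\varepsilon)\bigr)-\log(1-\varepsilon)$. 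Setting $c:=E(X)/(E(X)-\varepsilon)>1$, one checks that $R_N\bigl(x/(E(X)-\varepsilon)\bigr)=R_{E(X)N}(cx)$, and so
\begin{equation*}
\frac{R_{E(X)N}(cx)}{h_{E(X)N}(x)}=\frac{R_{E(X)N}(cx)}{h_{E(X)N}(cx)}\cdot\frac{h_{E(X)N}(cx)}{h_{E(X)N}(x)}.
\end{equation*}
The first factor tends to $1$ by hypothesis \eqref{cor1eq2}; the second is at most $c$ via the elementary sublinearity $h_{E(X)N}(cx)\le c\,h_{E(X)N}(x)$, valid for $c\ge 1$ because $h_{E(X)N}$ is concave with $h_{E(X)N}(0)=0$ (Lemma \ref{kertauslemma}). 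Thus $\limsup_{x\to\infty} R_{S_N}(x)/h_{E(X)N}(x)\le c$, and letting $\varepsilon\to 0^{+}$ closes the gap.

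The main obstacle is in the second part: the WLLN only controls $P(S_N>x)$ in terms of $P(N>x/(E(X)-\varepsilon))$ rather than $P(N>x/E(X))$, and this slack has to be absorbed into the ratio. The sublinearity built into natural scales by Lemma \ref{kertauslemma} is precisely what permits the absorption, with the free parameter $\varepsilon$ then sent to zero. The first part, by contrast, is automatic from the pathwise inequality $S_N\ge X_1$.
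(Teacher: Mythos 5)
Your proof is correct and follows the same route as the paper: the first part is verbatim the paper's argument (the pathwise bound $S_N\ge X_1$ gives $R_{S_N}\le R_X$, which sandwiches the ratio against the $\liminf=1$ supplied by Theorem \ref{theorem1}). For the second part the paper only says the argument is ``similar''; your law-of-large-numbers lower bound $P(S_N>x)\ge(1-\varepsilon)P\bigl(N>x/(E(X)-\varepsilon)\bigr)$, combined with the concavity estimate $h(cx)\le ch(x)$ of Lemma \ref{ekapau} to absorb the $\varepsilon$-slack, is a correct and complete filling-in of that omitted case, and it mirrors the technique already used in the proof of Theorem \ref{theorem1} part \ref{kohta2}.
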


\begin{remark}\label{remarkkk}
In Theorem \ref{theorem1} both of the variables $X$ and $N$ are assumed to be heavy-tailed. However, as mentioned in the introduction, cases where heavy-tailed increment is put against a light-tailed stopping or vice versa are straightforward. See Appendix \ref{lhappendix} for details.
\end{remark}

\begin{remark} In parts \ref{kohta1} and \ref{kohta2} of Theorem \ref{theorem1} the functions $h_X$ and $h_{E(X)N}$ that satisfy $h_X(x)\geq (1+\delta)\log x$ and $h_{E(X)N}(x)\geq (1+\delta)\log x$ for all $x\geq 0$ can be found from e.g Lemma \ref{kertauslemma}, part \ref{ehto4}. Once the functions are found, only conditions \eqref{th1eq1} and \eqref{th1eq2} remain to be verified.
\end{remark}

\begin{remark}\label{dominant} If $R_X \in \mathcal{I}(X)$ and $R_{E(X)N} \in \mathcal{I}(E(X)N)$ for some heavy-tailed variables $X$ and $E(X)N$, then the hazard functions themselves can be used as scales. Furthermore, 
$$P(E(X)N>x)=o(P(X>x)) \mbox{, as } x\to \infty$$ implies \eqref{th1eq1} while
$$P(X>x)=o(P(E(X)N>x))  \mbox{, as } x\to \infty$$ implies \eqref{th1eq2}. 
\end{remark}

\begin{remark}\label{crossremark}
The use of scales enables considerations where the difference in behaviours of tails of $X$ and $N$ is more delicate than asymptotic dominance as in Remark \ref{dominant}. In fact, the tail functions of $X$ and $N$ may even cross infinitely many times as long as the associated scales satisfy \eqref{th1eq1} or \eqref{th1eq2}. This phenomenon is illustrated in Example \ref{crossexample} of Section \ref{applications}.
\end{remark}

\subsection{Moment determinacy of random sums}

As recalled in the introduction, moment determinacy (in Stieltjes' sense) of random variable $X$ means that for all non-negative random variables $Y$:
\begin{equation}\label{momimp}
E(X^k)=E(Y^k), \, \forall k \in \mathbb{N} \Longrightarrow X\eqdist Y.
\end{equation}
If implication \eqref{momimp} does not hold, the distribution is called \emph{moment indeterminate}. In this Section we study the moment determinacy of random sums. More precisely, we wish to find sufficient conditions ensuring that the compounding operation preserves moment determinacy.

We begin with a lemma that reformulates the Hardy's condition presented in \cite{stoyanov2013hardy2} via tail functions. The condition gives an asymptotic test for the moment determinacy involving the concept of natural scale.

\begin{lemma}\label{momlemma}
Suppose $X\geq 0$ is a random variable. If
\begin{equation}\label{momdet2}
\liminf_{x \to \infty} \frac{R_X(x)}{\sqrt{x}}\in (0,\infty],
\end{equation}
then $X$ is determined by its moments. In addition, if a natural scale  $h_X$ of a heavy-tailed $X$ satisfies
\begin{equation}\label{momdet}
\liminf_{x \to \infty} \frac{h_X(x)}{\sqrt{x}}\in (0,\infty],
\end{equation}
then \eqref{momdet2} holds and thus $X$ is determined by its moments.
\end{lemma}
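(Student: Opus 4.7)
The plan is to reduce both claims to the classical Hardy moment-determinacy criterion from \cite{stoyanov2013hardy2}: if $E(e^{s\sqrt{X}})<\infty$ for some $s>0$, then the non-negative random variable $X$ is Stieltjes moment determinate. The link between this criterion and the tail function $R_X$ is supplied by part \ref{ehto2} of Lemma \ref{kertauslemma}, which turns the statement into a routine bookkeeping exercise.

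For the first assertion, I would apply part \ref{ehto2} of Lemma \ref{kertauslemma} with the test function $h(x)=\sqrt{x}$, which is continuous, strictly increasing on $[0,\infty)$, and tends to infinity. The lemma yields the identity
\[
\liminf_{x \to \infty} \frac{R_X(x)}{\sqrt{x}} \;=\; \sup\bigl\{s\geq 0 : E(e^{s\sqrt{X}})<\infty\bigr\},
\]
so hypothesis \eqref{momdet2} is equivalent to the existence of some $s>0$ satisfying Hardy's condition, and moment determinacy follows immediately.

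For the second assertion, the idea is to transfer the lower bound from $h_X$ to $R_X$ and then invoke the first part. By the definition of a natural scale one has $\liminf_{x\to\infty} R_X(x)/h_X(x)=1$, so for any $\varepsilon\in(0,1)$ the pointwise inequality $R_X(x)\geq (1-\varepsilon)h_X(x)$ holds for all $x$ beyond some threshold. Hypothesis \eqref{momdet} provides, for any $\delta\in(0,c)$ where $c:=\liminf h_X(x)/\sqrt{x}$, a second tail inequality $h_X(x)\geq (c-\delta)\sqrt{x}$ (with the obvious modification if $c=\infty$). Multiplying these two pointwise bounds gives $R_X(x)\geq (1-\varepsilon)(c-\delta)\sqrt{x}$ for large $x$, hence $\liminf R_X(x)/\sqrt{x}\geq (1-\varepsilon)(c-\delta)>0$, establishing \eqref{momdet2} and reducing to the first part.

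I do not expect a substantive obstacle: once part \ref{ehto2} of Lemma \ref{kertauslemma} is applied with $h(x)=\sqrt{x}$, everything else is a direct translation between a tail condition and an exponential moment of $\sqrt{X}$. The only minor point to watch is that $\liminf$ does not distribute over products, so one must work with the explicit pointwise tail inequalities guaranteed by the definition of a natural scale rather than manipulate liminfs symbolically.
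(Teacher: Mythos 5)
Your proposal is correct and follows essentially the same route as the paper: both parts reduce to Hardy's criterion via part \ref{ehto2} of Lemma \ref{kertauslemma} applied to $h(x)=\sqrt{x}$, and the second assertion is obtained by combining \eqref{momdet} with $\liminf R_X/h_X=1$ (the paper writes this as the superadditivity of $\liminf$ over products of nonnegative functions, which is valid here since both factors are positive; your pointwise-inequality version is just a more explicit rendering of the same step).
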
 

\begin{remark}
In Lemma \ref{momlemma}, the decision of moment determinacy is done using the asymptotic properties of the tail function, which makes small values of $X$ irrelevant. In particular, no assumption about the absolute continuity w.r.t. the Lebesgue measure is needed. These two properties make the test easily applicable compared to other tests such as Carleman condition or finiteness of the Krein integral combined with the Lin condition. For these tests, see \cite{gut2002moment, stoyanov2000krein}.
\end{remark}

Lemma \ref{momlemma} and techniques used in the proof of Theorem \ref{theorem1} enable the following result. 

\begin{theorem}\label{theorem22} Suppose assumptions \ref{ol1}-\ref{ol2} are valid.  Assume that $X$ and $N$ are heavy-tailed and $X$ is non-negative. Assume further that one of the conditions \ref{ehto21} or \ref{ehto22} holds:
\begin{enumerate}
\item \label{ehto21} There exists a natural scale $h_X\in \mathcal{I}(X)$ such that 
\begin{equation}\label{assumption1}
 \liminf_{x \to \infty} \frac{h_X(x)}{\sqrt{x}}\in (0,\infty]
\end{equation}
and 
\begin{equation}\label{th2eq1}
 \liminf_{x \to \infty} \frac{h_{N}(c_1 x)}{h_{X}(x)}\in (0,\infty]
\end{equation}
for some scale $h_N\in \mathcal{I}(N)$ and some $c_1>E(X)$.
\item \label{ehto22} There exists a natural scale $h_N \in \mathcal{I}(N)$  such that 
\begin{equation}\label{assumption2}
\liminf_{x \to \infty} \frac{h_{N}(E(X)x)}{\sqrt{x}}\in (0,\infty]
\end{equation}
and 
\begin{equation}\label{th2eq2}
\liminf_{x \to \infty} \frac{h_X(x)}{h_{ N}(E(X)x)}\in (0,\infty] ,
\end{equation}
for some scale $h_X\in \mathcal{I}(X)$
\end{enumerate}

Then $S_N$ is determined by its moments.
\end{theorem}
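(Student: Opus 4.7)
The plan is to reduce the claim to Lemma \ref{momlemma} applied to $S_N$: it suffices to show that $S_N$ has finite moments of all orders and that $\liminf_{x\to\infty} R_{S_N}(x)/\sqrt{x}\in(0,\infty]$. As a warm-up, chaining the two hypothesised liminf conditions in either case shows that both $X$ and $N$ have tails decaying at least like $\exp(-c\sqrt{\cdot})$. For instance in case \ref{ehto21}, \eqref{assumption1} combined with $\liminf_{x\to\infty} R_X(x)/h_X(x)=1$ yields $\liminf R_X(x)/\sqrt{x}>0$, while \eqref{th2eq1} chained with \eqref{assumption1} gives $\liminf h_N(c_1 x)/\sqrt{x}>0$ and hence $\liminf R_N(x)/\sqrt{x}>0$. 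Consequently $\I(X)=\I(N)=\infty$; since $X\geq 0$ is heavy-tailed we have $0<E(X)<\infty$, so $S_n\to\infty$ almost surely and Proposition \ref{momentprop} gives $\I(S_N)=\infty$.

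I would then control $P(S_N>x)$ via the standard split
\[ P(S_N>x)\leq P(N>\alpha x)+\sum_{n\leq \alpha x}P(N=n)\,P(S_n>x), \]
with $\alpha\in(0,1/E(X))$, for definiteness $\alpha=1/(2E(X))$, so that on $\{N\leq\alpha x\}$ one has $x\geq 2\,nE(X)$ and is in the small-deviation regime. The first term is at most $\exp(-c\sqrt{\alpha x})\leq\exp(-c'\sqrt{x})$ by the tail estimates above. The summand is controlled by a uniform sub-Weibull Bernstein bound $P(S_n>x)\leq C\exp(-c''\sqrt{x})$ valid for $n\leq\alpha x$: since $P(X>x)\leq\exp(-c\sqrt{x})$ and the deviation $x-nE(X)$ stays comparable to $x$, a two-regime concentration inequality (Gaussian for small deviations, Weibull of order $1/2$ for large ones) produces the required $\exp(-c''\sqrt{x})$ bound in both regimes when $n\leq\alpha x$. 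Summing against $P(N=n)$ preserves the rate, so $P(S_N>x)\leq C\exp(-c^\ast\sqrt{x})$, and Lemma \ref{momlemma} concludes moment determinacy. Case \ref{ehto22} is handled symmetrically by swapping the roles of $X$ and $N$, using \eqref{assumption2} to bound the large-$N$ contribution and \eqref{th2eq2} to bound the small-$N$ one.

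The main obstacle is the uniform sub-Weibull tail bound on $P(S_n>x)$. The naive estimate $\sqrt{S_n}\leq\sum_{i=1}^{n}\sqrt{X_i}$ followed by a Chernoff inequality at level $\sqrt{x}$ is too crude, because the resulting factor $E(e^{s\sqrt{X}})^N$ is not integrable when $N$ is heavy-tailed. The argument must instead exploit the centering by $E(X)$, separating the centred sum $S_n-nE(X)$ into a Gaussian and a Weibull regime and calibrating $\alpha$ so that only the favourable half of each regime contributes. This is essentially the probabilistic content already developed for Theorem \ref{theorem1}, where an analogous decomposition controls the small-$n$ contribution, and I expect it to be imported here with minor modifications.
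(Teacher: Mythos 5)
Your endgame is the same as the paper's (establish $\liminf_{x\to\infty}R_{S_N}(x)/\sqrt{x}>0$ and invoke Lemma \ref{momlemma}), and your preliminary reduction is correct: in either case the chained liminf conditions do give $R_X(x)\geq c\sqrt{x}$ and $R_N(x)\geq c\sqrt{x}$ eventually. But the central step of your argument --- the uniform bound $P(S_n>x)\leq C\exp(-c''\sqrt{x})$ for all $n\leq\alpha x$ --- is exactly the nontrivial content, and you have not supplied it. It is a Nagaev-type large-deviation inequality for sums with semiexponential (sub-Weibull of order $1/2$) increments, and it is \emph{not} ``the probabilistic content already developed for Theorem \ref{theorem1}'': the proof of Theorem \ref{theorem1} never decomposes over $\{N\leq\alpha x\}$ versus $\{N>\alpha x\}$ and never produces pointwise tail bounds for $P(S_n>x)$. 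Its engine is Lemma \ref{dlemma}, the expectation bound $E(e^{h(S_n)})\leq K(c)e^{h(nc)}$, which is summed directly against $P(N=k)$. So as written your proof rests on an unproven concentration inequality that you would have to import from outside the paper (e.g.\ Nagaev's results on semiexponential tails), and even then you would need to check uniformity in $n$ near the boundary regime $n\asymp x$, where the Gaussian and Weibull contributions are comparable.

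The paper's actual route is shorter and uses the hypotheses more directly. In case \ref{ehto21}, condition \eqref{th2eq1} chained with $\liminf R_{c_1N}/h_N(c_1\cdot)=1$ gives some $\eta>0$ with $E(e^{\eta h_X(c_1N)})<\infty$; condition \eqref{assumption1} guarantees $\eta h_X(x)\geq\log x$ eventually so that Lemma \ref{dlemma} applies; then
\begin{equation*}
E\bigl(e^{\eta h_X(S_N)}\bigr)=\sum_{k=1}^{\infty}E\bigl(e^{\eta h_X(S_k)}\bigr)P(N=k)\leq K(c_1)\,E\bigl(e^{\eta h_X(c_1N)}\bigr)<\infty,
\end{equation*}
and part \ref{ehto2} of Lemma \ref{kertauslemma} converts this finite exponential moment into $\liminf_{x\to\infty}R_{S_N}(x)/h_X(x)>0$, hence $\liminf_{x\to\infty}R_{S_N}(x)/\sqrt{x}>0$ by \eqref{assumption1}. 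Note also that your reduction throws away the cross-conditions \eqref{th2eq1} and \eqref{th2eq2}, keeping only ``both tails are $O(e^{-c\sqrt{x}})$''; in the paper these conditions are precisely what makes $E(e^{\eta h_X(c_1N)})$ finite so that the sum over $k$ converges. If you want to salvage your decomposition, the cleanest repair is to derive your uniform tail bound from Lemma \ref{dlemma} via Markov's inequality rather than from an external concentration inequality --- but then you are essentially reproducing the paper's argument with an unnecessary detour through pointwise bounds.
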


\section{Explanations with applications}\label{applications}

In the following Section we illustrate why some of the assumptions are made.

\subsection{The necessity of the positive drift}\label{positivedrift}

In Theorems \ref{theorem1} and \ref{theorem22} a positive drift of the underlying random walk is assumed. If this assumption was changed, the asymptotics of the stopped random walk $S_N$ would change significantly. 

In general, a random walk $(S_n)$ exhibits almost surely one of the following behaviours:
\begin{enumerate}
\item $S_n \to \infty$, as $n \to \infty$ (positive drift) \label{s1}
\item $S_n \to -\infty$, as $n \to \infty$ (negative drift) \label{s2}
\item $\liminf_{n \to \infty}S_n=-\infty$ and $\limsup_{x \to \infty}S_n=\infty$. \label{s3}
\end{enumerate}
If the increment $X$ satisfies $E(|X|)<\infty$, it is know that conditions \ref{s1}, \ref{s2} and \ref{s3} are equivalent with conditions $E(X)>0$, $E(X)<0$ and $E(X)=0$, respectively. For this and further characterisations of the conditions \ref{s1}-\ref{s3}, see \cite{erickson2005} or the beginning of \cite{kesten1996two}.

The following example demonstrates what can happen at the level of moments if the increment has a negative expectation.

\begin{example}\label{hyvaesim} Assume $(S_n)$ is a random walk, $S_n=X_1+\ldots + X_n$, and $N$ is independent of $(X_i)$. Assume further that
\begin{equation}\label{oletusyht}
P(X>x)\sim x^{-\alpha} ,
\end{equation}
as $x \to \infty$, where $\alpha\in (1,\infty)$. Suppose $X\geq -M$ for some number $M>0$ and $E(X)<0$. Define the point density function of $N$ by
$$P(N=k)=\frac{C}{k^{1+\eta}}, $$
where $k \in \mathbb{N}$, $C$ is a norming constant and $\eta>0$. With these choices the moment indices, defined in \eqref{momenttiindeksi}, satisfy $\I(X)=\alpha$ and $\I(N)=\eta$.

It can be shown using assumption \eqref{oletusyht} that for any $a>0$
\begin{equation}\label{kokk}
\liminf_{n \to \infty} \frac{ \log P(S_n>na)}{\log n}\geq 1-\alpha
\end{equation}
holds. See remark \ref{viimremark} for details.

For a small $\epsilon>0$, equation \eqref{kokk} ensures that there is a number $n_\epsilon \in \mathbb{N}$, such that
$$P(S_n>na)\geq n^{1-\alpha-\epsilon}, $$
when $n\geq n_\epsilon$. Hence, for a fixed $a>0$ we obtain
\begin{eqnarray}
E(((S_N)^+)^s)&\geq & a^s \sum_{k=1}^\infty k^s P(S_k>ka)P(N=k) \nonumber \\
&\geq &C a^s \sum_{k=n_\epsilon}^\infty k^s k^{1-\alpha-\epsilon}k^{-(1+\eta)} \nonumber \\
&=&C a^s \sum_{k=n_\epsilon}^\infty k^{s-\alpha-\eta-\epsilon}.\label{vikasumma}
\end{eqnarray}
The series in \eqref{vikasumma} diverges exactly when $s\geq \alpha+\eta+\epsilon-1$. This means that $\I(S_N)\leq \alpha+\eta+\epsilon-1$. Letting $\epsilon \to 0$ yields
\begin{equation}\label{ylayht}
\I(S_N)\leq \alpha+\eta-1.
\end{equation}
In addition to formula \eqref{ylayht}, a lower bound for the quantity $\I(S_N)$ can be obtained from the fact 
\begin{equation}\label{suparvo1}
S_N\leq \bar{S},
\end{equation}
where 
$$\bar{S}=\sup_{k \in \mathbb{N}} S_k. $$ 
Estimate \eqref{suparvo1} implies $\I(\bar{S})\leq \I(S_N)$. On the other hand, the moment index $\I(\bar{S})$ of the supremum of the random walk $(S_n)$ satisfies $\I(\bar{S})=\I(X)-1$. This follows from e.g. Theorem 5.2 of \cite{foss2011introduction} or from \cite{borovkov1976stochastic}, page 140 formula $(54)$. In summary, we get
\begin{equation}\label{ylayht2}
\alpha-1 \leq \I(S_N)\leq \alpha+\eta-1. 
\end{equation}

If $\eta<1$ and, say, $\alpha>2$, the bounds of Equation \eqref{ylayht2} show that the asymptotic behaviour of $S_N$ does not correspond to Formula \eqref{momtulos}. The discrepancy appears because the positive drift is replaced by a negative drift.
\end{example}

Example \ref{tokaex} demonstrates that the behaviour of $S_N$ can also be different from \eqref{momtulos}, when $E(X)=0$.

\begin{example}\label{tokaex} If $E(X)=0$, the Marcinkiewicz-Zygmund inequalities and their generalisations become available. These are used in Section 1.5 of \cite{gut2009stopped}. In fact, in Part (iii) of Theorem 5.1. of \cite{gut2009stopped} the following result is shown: If $E(|X|)<\infty$ and $E(X)=0$ it holds for $s\geq 2$ that
\begin{equation}\label{MZineq}
E(|S_N|^s) \leq C_s E(|X|^s)E(N^{s/2}),
\end{equation}
where $C_s$ is a positive constant.
Formula \eqref{MZineq} implies
$$\I(S_N)\geq \min(\I(|X|),2\I(N))=\min(\I(X^+),\I(X^-),2\I(N)),$$
when $\min(\I(X^+),\I(X^-))\geq  2$ and $\I(N)\geq 1$. Clearly, this is not consistent with Equation \eqref{momtulos}.
\end{example}

The next example demonstrates the possibility that the stopping $N$ can determine the behaviour of $S_N$ even if the increment has a symmetric distribution. 
\begin{example}\label{kolmesim} Suppose that the increment $X$ is Cauchy distributed with density
$$f(x)=\frac{1}{\pi}\frac{1}{(1+x)^2}, \quad x \in \mathbb{R}. $$
Then, it is well known that for any $n \in \mathbb{N}$:
\begin{equation}\label{cauchy}
\frac{S_n}{n}\stackrel{d}{=}X
\end{equation}
holds. Using \eqref{cauchy} it is clear that
$$E(((S_N)^+)^s)= \sum_{k=1}^\infty k^sE(((S_k/k)^+)^s)P(N=k)=E((X^+)^s)E(N^s), $$
which implies 
$$ \I(S_N)=\min(\I(N),\I(X)).$$ 
This is the content of \eqref{momtulos}. 

\end{example}
Examples \ref{hyvaesim}, \ref{tokaex} and \ref{kolmesim} together show that the behaviour of moments of $S_N$ is universally determined by \eqref{momtulos} only when the drift is positive. Without the positive drift behaviour of $S_N$ can change or stay the same.

\subsection{Zigzagging tail functions}

As mentioned in Remark \ref{crossremark}, scales of hazard functions may be used in situations where the tails cross infinitely often. The following example illustrates how scales can still be obtained and thus Theorem \ref{theorem1} applied.

\begin{example}\label{crossexample} 
We return to the setting of Appendix A.1 of \cite{oma2}.  Suppose we are given functions 
$$h_i \colon [0,\infty) \to [0,\infty),$$ 
where $i=1,2,3,4$ that satisfy
\begin{enumerate}[I]
\item $h_i(0)=0$ and  $h_i$ is strictly increasing and continuous for every $i$
\item $h_j(x)<h_i(x)$ when $j<i$ for all $x\geq 0$
\item $h_1$ and $h_2$ are concave.
\end{enumerate} 

\begin{figure}
 \centering
 \def\svgwidth{\columnwidth}
 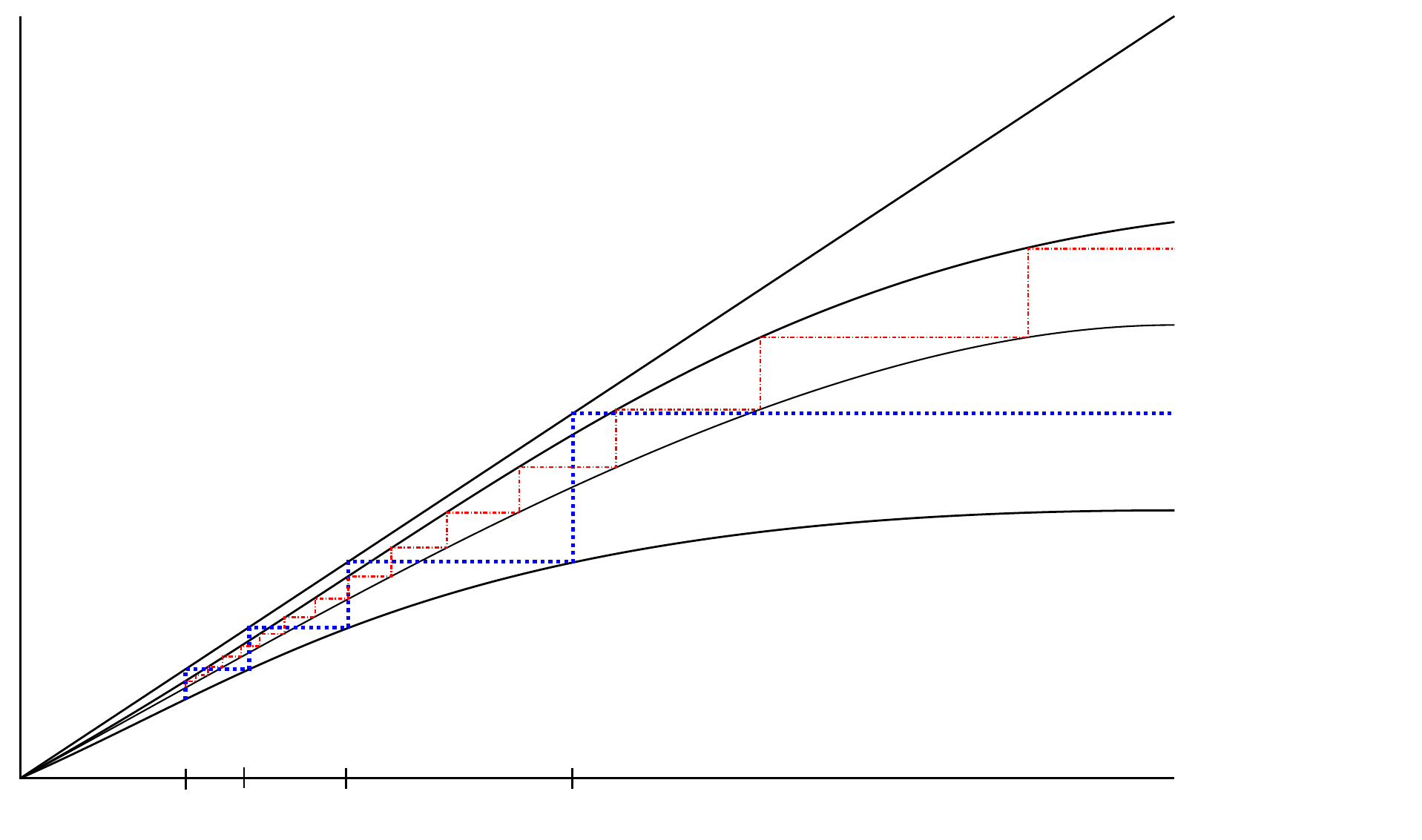
 \caption{Illustration of the construction of functions $R_X$ (blue) and $R_Y$ (red).  \label{fig2} }
\end{figure}
First, we define the sequences $(x_n)$ and $(y_n)$ are defined by requirements
\begin{enumerate}
\item $x_0=y_0=1$ 
\item $x_{n+1}=h_1^{-1}(h_4(x_n))$
\item $y_{n+1}=h_2^{-1}(h_3(x_n))$,
\end{enumerate}
where $n \in \{0,1,2,\ldots\}$. Then, we construct random variables $X$ and $Y$, which are concentrated to the sets $\{x_n: n\in \mathbb{N}\}$ and $\{y_n: n\in \mathbb{N}\}$, respectively. We set 
$$P(X=x_n)= e^{-h_4(x_n)}-e^{-h_1(x_n)}$$
and
$$P(Y=y_n)= e^{-h_3(y_n)}-e^{-h_2(y_n)}$$
for $n \in \{0,1,2,\ldots\}$. The procedure is illustrated in Figure \ref{fig2}.

Now, by construction, it must hold that
\begin{equation}\label{esimy1}
\limsup_{x \to \infty} \frac{R_X(x)}{h_4(x)}=\limsup_{x \to \infty} \frac{R_Y(x)}{h_3(x)}=\liminf_{x \to \infty} \frac{R_Y(x)}{h_2(x)}=\liminf_{x \to \infty} \frac{R_X(x)}{h_1(x)}=1.
\end{equation}
By continuity of the functions $h_i$ the difference of hazard functions $R_X(x)-R_Y(x)$ must change signs infinitely many times. This implies that the tails must also cross infinitely many times. However, from \eqref{esimy1} we may still conclude that $h_1\in \mathcal{I}(X)$ and $h_2\in \mathcal{I}(Y).$
 
\end{example}

\section{Proofs}\label{proofs}

\begin{proof}(Proof of Proposition \ref{momentprop})
We divide the proof into separate cases.
\begin{enumerate}
\item We will first prove \eqref{momtulos}. To see that 
$$\I(S_N) \geq \min(\I(N),\I(X)),$$ 
we can use one of the following tools:
\begin{enumerate}
\item \label{aa} Subadditivity of the function $x\mapsto x^s$ for $x\geq 0$, 
\item \label{bb} Jensen's inequality or
\item \label{cc} Minkowski's inequality.
\end{enumerate}

Depending on the value of $s$, the aim is to bound the expectation $E((S_N^+)^s)$ from above. Suppose $0<s\leq 1$. Because of the fact $(S_N)^+\leq \sum_{k=1}^N(X_k)^+$ and the subadditivity it must hold that
\begin{eqnarray}
E((S_N^+)^s)&\leq & \sum_{k=1}^\infty E \left( \left( \sum_{m=1}^k X_m^+\right)^s \right)P(N=k) \nonumber\\
&\leq & \sum_{k=1}^\infty kE ( (X^+)^s)P(N=k)\nonumber  \\
&=& E ( (X^+)^s)E(N).\label{sub}
\end{eqnarray}
On the other hand, applying the Jensen's inequality to the concave function $x\mapsto x^s$ we get
\begin{eqnarray}
E((S_N^+)^s)&\leq & \sum_{k=1}^\infty E \left( \left( \sum_{m=1}^k X_m^+\right)^s \right)P(N=k) \nonumber \\
&\leq & \sum_{k=1}^\infty k^sE ( X^+)^sP(N=k) \nonumber \\
&=& (E ( X^+))^sE(N^s). \label{jens}
\end{eqnarray}

Suppose then $s>1$. By Minkowski's inequality we have 
\begin{eqnarray}
E((S_N^+)^s)&\leq & \sum_{k=1}^\infty E \left( \left( \sum_{m=1}^k X_m^+\right)^s \right)P(N=k) \nonumber\\
&\leq & \sum_{k=1}^\infty k^sE ( (X^+)^s)P(N=k)\nonumber  \\
&=& E ( (X^+)^s)E(N^s).\label{mink}
\end{eqnarray}
Equations \eqref{sub}, \eqref{jens} and \eqref{mink} imply $\I(S_N) \geq \min(\I(N),\I(X))$ under the assumption that one of the expectations $E(X^+)$ or $E(N)$ is finite. 

We will then prove $\I(S_N) \leq \min(\I(N),\I(X))$. Firstly, because 
\begin{equation}\label{trivial}
(S_N)^+\geq X^+ \mathbf{1}(N=1),
\end{equation}
we have that $\I(S_N) \leq \I(X)$. In the case of a finite expectation we note that for any $a\in (0,E(X))$ it holds for a positive constant $C$ that $P(S_k>ak)\geq C$ for some $C>0$ that does not depend on $k$. This is clear from the law of large numbers. In the remaining case where $E(|X|)=\infty$ but $S_n\to \infty$ almost surely we have, see the beginning of e.g. \cite{kesten1996two}, that $S_n/n\to \infty$ almost surely. Hence
\begin{eqnarray*}
E((S_N^+)^s)& \geq &  \sum_{k=1}^\infty E \left( \left( S_k^+ \right)^s \right)P(N=k) \nonumber \\
&\geq &  \sum_{k=1}^\infty E \left( \left( S_k^+\right)^s \mathbf{1}(S_k>ak)\right)P(N=k) \nonumber \\
& \geq & a^s\sum_{k=1}^\infty k^s P(S_k>ak)P(N=k) \nonumber \\
& \geq & a^sCE(N^s),
\end{eqnarray*}
which proves $\I(S_N) \leq \I(N)$. This ends the proof of \eqref{momtulos}.

\item We are set to prove \eqref{momtulos2}. Since the proof of the inequality $\I(S_N) \leq \min(\I(N),\I(X))$ did not require any assumptions about the finiteness of the expected values $E(|X|)$ and $E(N)$, we only need to show $\I(S_N) \geq \I(X) \I(N).$ Without any loss of generality we can assume that $\min(\I(N),\I(X))>0$.

Fix a small number $\epsilon>0$. Set $q_1:=\I(X)-\epsilon$, $q_2:=\I(N)$ and $q:=q_1 q_2$. Now, for $0<s<1$, it must hold by subadditivity and Jensen's inequality that
\begin{eqnarray}
E(((S_N^+)^q)^s)&\leq & \sum_{k=1}^\infty E \left( \left( \left(\sum_{m=1}^k X_m^+\right)^{q_1} \right)^{q_2s} \right)P(N=k) \nonumber\\
&\leq & \sum_{k=1}^\infty E \left( \left( \sum_{m=1}^k \left( X_m^+\right)^{q_1} \right)^{q_2s} \right)P(N=k) \nonumber \\
&\leq & \sum_{k=1}^\infty k^{q_2s}E ( (X^+)^{q_1})^{q_2s}P(N=k)\nonumber  \\
&=& E ( (X^+)^{q_1})^{q_2s}E(N^{q_2s})<\infty.\label{mink2}
\end{eqnarray}
Equation \eqref{mink2} implies $\I((S_N^+)^q)\geq 1$, which is equivalent to 
$$\I(S_N)\geq q=(\I(X)-\epsilon)\I(N).$$ 
Now, letting $\epsilon \to 0$ proves the claim.

\end{enumerate}
\end{proof}

\begin{proof}(Proof of Corollary \ref{momcor}) Set 
$$ S^a_N:=\sum_{k=1}^N \left( X-E(X)+|E(X)|+1\right) $$
and 
$$ S^b_N:=\sum_{k=1}^N \left( X-E(X)-(|E(X)|+1)\right). $$

Since $ S^{b}_N\leq S_N\leq S^a_N$, it must hold that
\begin{equation}\label{cor1per1}
\I(S^{b}_N) \geq  \I(S_N) \geq \I(S^a_N).
\end{equation}
The underlying random walk of $S^a_N$ is positive, while that of $S^b_N$ is negative.

By Proposition \ref{momentprop}, $\I(S^a_N)=\min(\I(X-E(X)+|E(X)|+1),\I(N))$. Using the basic properties of moment indices, see e.g. Section 2.2 of \cite{oma1}, and the assumption $\I(N)\geq \I(X)$ we have that $\I(S^a_N)=\I(X)$. In addition, similarly as in Equation \eqref{trivial}, we obtain $\I(S^{b}_N)\leq \I(X-E(X)-(|E(X)|+1)=\I(X)$. Equation \eqref{cor1per1} proves the claim.
\end{proof}

The proof of Proposition \ref{momentprop} was based on application of classical inequalities. Once we move on to the study of general scales, the following two technical lemmas become necessary. 

\begin{lemma}\label{ekapau} Suppose that $h \colon [0,\infty) \to [0,\infty)$ is an increasing concave function with $h(0)=0$. Then for all $x\geq 0$:
\begin{center}
$ \left\{ \begin{array}{ll}
        h(cx)\geq ch(x), &  \mbox{if } 0<c<1 \mbox{ and}\\
        h(cx)\leq ch(x), &  \mbox{if } c>1.\end{array} \right. $ 
\end{center}
\begin{proof} Assume $x>0$. Suppose first that $0<c<1$. Using the definition of concavity we have 
$$h(cx)=h(cx+(1-c)0)\geq ch(x)+(1-c)h(0)=ch(x), $$
which proves the first part.

Suppose then that $c>1$. Set $y:=c x$. Applying the first part at point $y$ we get
$$h((1/c)y)\geq (1/c)h(y), $$
that is,
$$h(cx)\leq c h(x) $$
holds and we are done.
\end{proof}
\end{lemma}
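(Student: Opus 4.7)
The plan is to exploit the definition of concavity directly, with $h(0)=0$ doing the work of turning a convex combination into a clean scaling inequality. The two cases are dual to each other, and the $c>1$ case should reduce to the $c<1$ case by a reciprocal substitution, so there is really only one substantive step.

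For the first case $0<c<1$, I would observe that $cx$ is itself a convex combination of $x$ and $0$, namely $cx=c\cdot x+(1-c)\cdot 0$. Since $h$ is concave and $h(0)=0$, concavity gives
\[
h(cx)\;\geq\; c\,h(x)+(1-c)\,h(0)\;=\;c\,h(x),
\]
which is exactly the desired inequality. Note that nothing beyond concavity and $h(0)=0$ is used here; monotonicity of $h$ is not needed for this step.

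For the second case $c>1$, I would reduce to the first case by setting $d:=1/c\in(0,1)$ and $y:=cx$. Applying the first case with parameter $d$ at the point $y$ yields $h(dy)\geq d\,h(y)$, i.e.\ $h(x)\geq (1/c)\,h(cx)$, and rearranging gives $h(cx)\leq c\,h(x)$, as required.

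I do not expect a real obstacle: the lemma is essentially the standard fact that a concave function vanishing at the origin has a decreasing secant slope from $0$. The only thing to be careful about is the trivial boundary case $x=0$, where both inequalities reduce to $0\geq 0$ or $0\leq 0$ and hold automatically, so one can freely assume $x>0$ when invoking the convex combination identity.
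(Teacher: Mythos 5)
Your proposal is correct and follows essentially the same route as the paper's proof: the convex-combination identity $cx=c\cdot x+(1-c)\cdot 0$ together with $h(0)=0$ for the case $0<c<1$, and the reciprocal substitution $y:=cx$ to reduce the case $c>1$ to the first case. The added remarks about the trivial case $x=0$ and about monotonicity being unnecessary are accurate but do not change the argument.
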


\begin{lemma}\label{jvalemma} Let $X$ be a non-negative heavy-tailed random variable and $h_X \in \mathcal{I}(X)$. Then the mapping
$$c \mapsto \I_{h_X}(cX) $$
is continuous at point $c=1$.
\begin{proof} By concavity and the property $h_X(0)=0$, Lemma \ref{ekapau} is applicable.

For a fixed $0<c_1<1$ we may estimate $E(e^{sh_X(c_1X)})\geq E(e^{c_1 s h_X(X)})$ and obtain
\begin{equation}\label{apu1}
\sup \{s\geq 0: E(e^{sh_X(c_1X)})<\infty\} \leq \sup \{s\geq 0: E(e^{sc_1h_X(X)})<\infty\} =\frac{1}{c_1}. 
\end{equation}
Similarly, for a fixed $c_2>1$ we have $E(e^{sh_X(c_2X)})\leq E(e^{c_2 s h_X(X)})$ and so
\begin{equation}\label{apu2}
\sup \{s\geq 0: E(e^{sh_X(c_2X)})<\infty\} \geq \sup \{s\geq 0: E(e^{sc_2h_X(X)})<\infty\} =\frac{1}{c_2}. 
\end{equation}
Letting $c_1\uparrow 1$ in \eqref{apu1} and $c_2 \downarrow 1$ in \eqref{apu2} we finally get that 
$$\lim_{c \to  1} \left( \sup \{s\geq 0: E(e^{sh_X(cX)})<\infty\}\right)= 1, $$
which proves the claim.
\end{proof}
\end{lemma}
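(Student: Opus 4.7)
The plan is to prove continuity at $c = 1$ of $\phi(c) := \I_{h_X}(cX)$ by producing matching two-sided bounds and squeezing. The starting point will be the identity $\phi(1) = 1$, which follows from $h_X \in \mathcal{I}(X)$ via Lemma \ref{kertauslemma} parts \ref{ehto2} and \ref{ehto3}, together with the characterisation $\phi(c) = \sup\{s \geq 0 : E(e^{sh_X(cX)}) < \infty\}$.

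I intend to obtain one side of the squeeze from monotonicity of $h_X$: since $h_X$ is non-decreasing, $c \leq c'$ implies $h_X(cX) \leq h_X(c'X)$ pointwise, hence $\phi$ is non-increasing in $c$; in particular $\phi(c) \geq 1$ for $c \in (0,1]$ and $\phi(c) \leq 1$ for $c \geq 1$.

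The complementary bounds will come from concavity via Lemma \ref{ekapau}. For $c \in (0,1)$ the inequality $h_X(cx) \geq c h_X(x)$ yields
\begin{equation*}
E\bigl(e^{sh_X(cX)}\bigr) \geq E\bigl(e^{sch_X(X)}\bigr),
\end{equation*}
and combined with $\phi(1) = 1$ this forces $\phi(c) \leq 1/c$; for $c > 1$ the reverse concavity inequality $h_X(cx) \leq c h_X(x)$ flips the domination of expectations and gives $\phi(c) \geq 1/c$. Assembling the two families of bounds, I obtain $1 \leq \phi(c) \leq 1/c$ for $c \in (0,1)$ and $1/c \leq \phi(c) \leq 1$ for $c > 1$, so letting $c \to 1$ and using $1/c \to 1$ finishes by squeeze.

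The main subtlety I expect is noticing that Lemma \ref{ekapau} only produces a one-sided bound on $\phi$ in each of the regimes $c < 1$ and $c > 1$, while monotonicity produces the opposite one-sided bound; neither ingredient on its own pins $\phi$ down on both sides of $c = 1$, so the argument is really a squeeze combining two separate considerations. Once this structural point is recognised, the remaining work (translating expectation inequalities into inequalities between $\phi(c)$ and $1/c$, and invoking Lemma \ref{kertauslemma} part \ref{ehto3}) is routine.
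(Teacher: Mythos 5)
Your proposal is correct and follows essentially the same route as the paper: the key bounds $\I_{h_X}(cX)\leq 1/c$ for $c<1$ and $\I_{h_X}(cX)\geq 1/c$ for $c>1$ come from exactly the concavity inequalities of Lemma \ref{ekapau} applied inside the exponential moment, followed by letting $c\to 1$. The only difference is that you make explicit the complementary one-sided bounds via monotonicity of $c\mapsto \I_{h_X}(cX)$ (so that the squeeze is fully justified), a step the paper's proof leaves implicit.
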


Before the proof of Theorem \ref{theorem1} we recall an important lemma concerning the growth rate of transformations of sums of heavy-tailed random variables.

\begin{lemma}(Lemma 3 of \cite{denisov2008lower2})\label{dlemma}
\\
Let $\xi$ be a nonnegative random variable. Let $h \colon [0,\infty) \to [0,\infty)$ be a nondecreasing and eventually concave function such that $h(x)=o(x)$ as $x \to \infty$ and $h(x)\geq \log x$ for all sufficiently large $x$. If $E(e^{h(\xi)})<\infty$, then, for any $c>E(\xi)$, there exists a constant $K(c)$ such that $E(e^{h(S_n)})\leq K(c) e^{h(nc)}$.
\begin{proof} See \cite{denisov2008lower2}, pp. 694-695.
\end{proof}
\end{lemma}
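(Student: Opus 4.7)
\emph{Proof plan.} The plan is to split the expectation at the threshold $nc$. After a harmless modification of $h$ on a bounded interval, assume $h$ is everywhere concave with $h(0)=0$, so that $h$ is subadditive. By monotonicity of $h$ and the layer-cake formula,
\[
E(e^{h(S_n)})\;\leq\; e^{h(nc)}+\int_{nc}^\infty h'(y)e^{h(y)}P(S_n>y)\,dy,
\]
so the whole task is to bound the tail integral by a constant multiple of $e^{h(nc)}$ uniformly in $n$.

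For that integral I would invoke a principle-of-a-single-big-jump estimate. Fix $a\in(E\xi,c)$; truncating each summand at level $a$, the truncated sum has negative drift, which via Chernoff controls the event that no summand exceeds $a$ up to a superexponentially small error, while a union bound over which summand exceeds $a$ gives the remaining contribution. This yields, for $y\geq nc$,
\[
P(S_n>y)\;\leq\; K_1\,n\,P(\xi>y-na)+K_2\,e^{-\beta n}.
\]
Plugging this into the tail integral, subadditivity $h(y)\leq h(na)+h(y-na)$, concavity $h'(y)\leq h'(y-na)$, and the substitution $z=y-na$ bound the principal part by
\[
K_1\,n\,e^{h(na)}\int_{n(c-a)}^\infty h'(z)e^{h(z)}P(\xi>z)\,dz \;\leq\; K_1\,n\,e^{h(na)}\,E\bigl(e^{h(\xi)}\mathbf{1}\{\xi>n(c-a)\}\bigr),
\]
which is finite by hypothesis.

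The main obstacle is to absorb the prefactor $n$ and the factor $e^{h(na)}$ into $K(c)e^{h(nc)}$. The superexponential error $K_2 n e^{-\beta n}$ is dominated by $e^{h(nc)}$ for free since $h(x)=o(x)$; the principal term is more delicate. The hypothesis $h(x)\geq\log x$ is what closes the argument: it forces the weighted tail $E(e^{h(\xi)}\mathbf{1}\{\xi>R\})$ to decay fast enough in $R$ to compensate the factor $n$ when $R=n(c-a)$, while concavity of $h$ controls the ratio $e^{h(na)}/e^{h(nc)}$ through the nonnegative gap $h(nc)-h(na)$. Carefully balancing these trade-offs by a judicious choice of $a\in(E\xi,c)$ is the technical heart of the argument.
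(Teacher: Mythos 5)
The paper does not actually prove this lemma: it is quoted verbatim from Denisov, Foss and Korshunov together with a pointer to their proof, so your attempt has to stand on its own -- and as written it has two genuine gaps. The first is the intermediate inequality $P(S_n>y)\leq K_1\,n\,P(\xi>y-na)+K_2e^{-\beta n}$. The union bound over which summand exceeds $y-na$ produces the term $n\,P(\xi>y-na)$; what is left over is the event $\{S_n>y,\ \max_i\xi_i\leq y-na\}$, and for heavy-tailed $\xi$ this is \emph{not} exponentially small in $n$. Your Chernoff step controls the event that no summand exceeds the \emph{fixed} level $a$, which is vacuous here (all $\xi_i\leq a$ forces $S_n\leq na<y$), but says nothing about the event that no summand exceeds the \emph{growing} level $y-na$. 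That event contains, for instance, configurations with two jumps of size about $n(c-E(\xi))/2$ each, whose probability is of order $n^2e^{-2h(n(c-E(\xi))/2)}$ -- subexponential, because $h(x)=o(x)$. No bound of the form $K_2e^{-\beta n}$ with $\beta>0$ is available, and controlling this residual term is precisely where the real work of the lemma lies.

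The second gap is the final balancing, which you defer to "a judicious choice of $a$" but which in fact cannot close with the estimates you have set up. Your principal term is $K_1\,n\,e^{h(na)}\,E\bigl(e^{h(\xi)}\mathbf{1}\{\xi>n(c-a)\}\bigr)$, and you assert that $h(x)\geq\log x$ forces this weighted tail to decay fast enough to absorb the prefactor $n$. It does not: take $h(x)=\log x$ for large $x$, so that $E(e^{h(\xi)})<\infty$ is just $E(\xi)<\infty$; then $E(e^{h(\xi)}\mathbf{1}\{\xi>R\})=E(\xi\mathbf{1}\{\xi>R\})$ may decay as slowly as $1/\log R$, your bound becomes of order $n^2/\log n$ against a target of order $n=e^{h(nc)}/c$, and the argument fails even though the lemma is trivially true in this case since $E(e^{h(S_n)})=E(S_n)=nE(\xi)$. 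The loss comes from the subadditivity step $h(z+na)\leq h(na)+h(z)$, which overshoots by a factor of order $e^{h(na)}$; what does work is the concavity estimate $h(z+na)-h(z)\leq h(nc)-h(n(c-a))$ for $z\geq n(c-a)$, after which $h\geq\log$ gives $n\,e^{-h(n(c-a))}\leq 1/(c-a)$ and the factor $n$ is absorbed. So your intuition about the role of $h(x)\geq\log x$ is right in spirit, but your specific chain of inequalities does not deliver it, and the first gap is untouched by any choice of $a$. You should consult the cited proof rather than pursue this route.
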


\begin{proof}(Proof of Theorem \ref{theorem1}) 
\begin{enumerate}
\item Note first that Assumption \eqref{th1eq1} implies 
$$\liminf_{x \to \infty} \frac{R_{c_1N}(x)}{h_X(x)} \geq \left( \liminf_{x \to \infty} \frac{R_{c_1N}(x)}{h_N(c_1 x)}\right) \left( \liminf_{x \to \infty} \frac{h_N(c_1 x)}{h_X(x)}\right)\geq 1.$$
Using part \ref{ehto2} of Lemma \ref{kertauslemma} we see that 
\begin{equation}\label{aarpaat}
E(e^{(1-\epsilon)h_X(c_1N)})<\infty
\end{equation}
for any $\epsilon>0$.

Recall from assumptions that $\delta>0$ is fixed and $c_1>E(X)$. Let 
$$0<\eta<\delta/(1+\delta).$$
Using the fact that 
$$(1-\eta)h_X(x)\geq \log x$$
and Lemma \ref{dlemma} we obtain the upper bound
\begin{equation}\label{sovellus1}
E(e^{(1-\eta)h_X(S_n^+)})\leq E(e^{(1-\eta)h_X(X_1^+ +\ldots +X_n^+)})\leq K(c_1)e^{(1-\eta)h_X(c_1 n)}
\end{equation}
for all $n \in \mathbb{N}$, where the constant $K(c_1)$ only depends on $c_1$ but not on $n$.

Utilising estimates \eqref{sovellus1} and \eqref{aarpaat} we get
\begin{eqnarray*}
E(e^{(1-\eta)h_X(S_N^+)})&=&\sum_{k=1}^\infty E(e^{(1-\eta)h_X(S_k^+)})P(N=k) \\
&\leq & \sum_{k=1}^\infty K(c_1)e^{(1-\eta)h_X(c_1 n)} P(N=k) \\
&=& K(c_1)E(e^{(1-\eta)h_X(c_1 N)})<\infty.
\end{eqnarray*}
Since the mapping $s \mapsto E(e^{sh_X(S_N)})$ is increasing,  $E(e^{sh_X(S_N)})<\infty$ holds for all $0<s<1$. Thus $\I_{h_X}(S_N)\geq 1$. On the other hand, for any $\epsilon>0$ it holds that
$$E(e^{(1+\epsilon)h_X(S_N^+)})\geq E(e^{(1+\epsilon)h_X(X^+)})P(N=1)=\infty $$
and so $\I_{h_X}(S_N)\leq 1$. This proves $h_X\in \mathcal{I}(S_N)$.

\item  Suppose $0<\eta_1<\delta/(1+\delta)$ is fixed. From Lemma \ref{jvalemma} we know that the mapping $c\mapsto \I_{h_{E(X)N}}(cN)$ is continuous in a neighbourhood of $c=E(X)$. Here the connection of part \ref{l2part3} of Lemma \ref{ekalemma} is used. Because of continuity we may find a number $c_{\eta_1}>E(X)$ such that 
\begin{equation}\label{arg1}
E(e^{(1-\eta)h_{E(X)N}(c_{\eta_1} N)})<\infty.
\end{equation} 
Using similar arguments as in the first part of the theorem and Equation \eqref{arg1} we get
\begin{eqnarray*}
E(e^{(1-\eta_1)h_{E(X)N}(S_N^+)})&=&\sum_{k=1}^\infty E(e^{(1-\eta_1)h_{E(X) N}(S_k^+)})P(N=k) \label{11}\\
&\leq & \sum_{k=1}^\infty K(c_{\eta_1})e^{(1-\eta_1)h_{E(X) N}(c_{\eta_1} k)} P(N=k) \label{22} \\
&=& K(c_{\eta_1})E(e^{(1-\eta_1)h_{E(X)N}(c_{\eta_1} N)})<\infty. \label{33}
\end{eqnarray*}
This shows $\I_{h_{E(X) N}}(S_N)\geq 1.$

Other direction follows from the law of large numbers: Assume $\eta_2>0$ is given. Then by Lemma \ref{jvalemma} we may choose a number $0<c_{\eta_2}<E(X)$ so that
\begin{equation}\label{arg2}
E(e^{(1+\eta_2)h_{E(X)N}(c_{\eta_2} N)})=\infty.
\end{equation}
In addition, by the law of large numbers we may find a constant $C=C_{\eta_2,c_{\eta_2}}>0$ such that $P(S_k>c_{\eta_2} k)\geq C$ for all $k \in \mathbb{N}$. Now
\begin{eqnarray*}
E(e^{(1+\eta_2)h_{E(X)N}(S_N^+)})&=&\sum_{k=1}^\infty E(e^{(1+\eta_2)h_{E(X) N}(S_k^+)})P(N=k) \\
& \geq & \sum_{k=1}^\infty E(e^{(1+\eta_2)h_{E(X) N}(S_k^+)}\mathbf{1}(S_k>c_{\eta_2} k))P(N=k) \\
&=& \sum_{k=1}^\infty e^{(1+\eta_2)h_{E(X) N}(c_{\eta_2} k)}P(S_k>c_{\eta_2} k)P(N=k) \\
& \geq & C\sum_{k=1}^\infty e^{(1+\eta_2)h_{E(X) N}(c_{\eta_2} k)}P(N=k) \\
&=&C E(e^{(1+\eta_2)h_{E(X)N}(c_{\eta_2} N)})=\infty.
\end{eqnarray*}
This proves $\I_{h_{E(X) N}}(S_N)\leq 1$ and we get $h_{E(X)N}\in \mathcal{I}(S_N)$.
\end{enumerate}
\end{proof}

\begin{proof}(Proof of Corollary \ref{corollary1})
Note first that $P(S_N>x)\geq P(X>x)$ and therefore $R_{S_N}(x)\leq R_X(x)$ holds for all $x$, since $X\geq 0$ almost surely. Application of Theorem \ref{theorem1}, and the assumption \eqref{cor1eq1} gives
$$1=\liminf_{x\to \infty} \frac{R_{S_N}(x)}{h_X(x)}\leq \limsup_{x\to \infty} \frac{R_{S_N}(x)}{h_X(x)}\leq \limsup_{x\to \infty} \frac{R_{X}(x)}{h_X(x)}=1 $$
and proves the claim. The proof of the remaining case is similar.
\end{proof}

\begin{proof}(Proof of Lemma \ref{momlemma})
Implication is easily verified by observing that \eqref{momdet} together with the definition of natural scale and positivity yields
$$\liminf_{x \to \infty} \frac{R(x)}{\sqrt{x}}\geq \left( \liminf_{x \to \infty} \frac{h(x)}{\sqrt{x}} \right) \left(\liminf_{x \to \infty} \frac{R(x)}{h(x)}\right) >0.$$
Hence, by part \ref{ehto2} of Lemma \ref{kertauslemma}, there exists $c>0$ such that 
$$E(e^{c\sqrt{X}})<\infty $$
and Theorem 1 of of \cite{stoyanov2013hardy2} confirms that $X$ is determined by its moments.
\end{proof}

\begin{proof}(Proof of Theorem \ref{theorem22})
The proof utilises similar techniques as the proof of Theorem \ref{theorem1}. 

\begin{enumerate}
\item \label{proofpart1} Suppose first that Condition \ref{ehto21} of Theorem \ref{theorem22} holds. Since
$$\liminf_{x \to \infty} \frac{R_{c_1N}(x)}{h_X(x)} \geq \left( \liminf_{x \to \infty} \frac{R_{c_1N}(x)}{h_N(c_1 x)}\right) \left( \liminf_{x \to \infty} \frac{h_N(c_1 x)}{h_X(x)}\right)>0,$$
there must exist $\eta>0$ so that 
$$E(e^{\eta h_X(c_1N)})<\infty. $$
Clearly, by assumption \eqref{assumption1}, $\eta h_X(x)\geq \log x$ holds for all $x$ large enough. This enables the use of Lemma \ref{dlemma}.

We get 
\begin{eqnarray*}
E(e^{\eta h_X(S_N)})&=&\sum_{k=1}^\infty E(e^{\eta h_X(S_k)})P(N=k) \\
&\leq & \sum_{k=1}^\infty K(c_1)e^{\eta h_X(c_1 n)} P(N=k) \\
&=& K(c_1)E(e^{\eta h_X(c_1 N)})<\infty.
\end{eqnarray*}
This implies, using part of \ref{ehto2} of Lemma \ref{kertauslemma}, that
\begin{equation}\label{th2apu1}
\liminf_{x \to \infty} \frac{R_{S_N}(x)}{h_X(x)}>0.
\end{equation} 
By assumption \eqref{assumption1} there exist a constant $c>0$ such that $h_X(x)\geq c \sqrt{x}$ for all $x$ large enough. Hence \eqref{th2apu1} implies 
$$\liminf_{x \to \infty} \frac{R_{S_N}(x)}{\sqrt{x}}>0 $$
and the claim follows directly from Lemma \ref{momlemma}.

\item The proof of the second part follows same kind of arguments as the first part. First, we see that 
$$\liminf_{x \to \infty} \frac{R_{X}(x)}{h_{E(X)N}(x)} \geq \left( \liminf_{x \to \infty} \frac{R_{X}(x)}{h_X(x)}\right) \left( \liminf_{x \to \infty} \frac{h_X(x)}{h_{ N}(E(X)x)}\right)>0,$$
ensuring the existence of small $\eta>0$ so that 

$$E(e^{\eta h_{E(X)N}(X)})<\infty. $$

Using Lemma \ref{jvalemma} we may find a constant $c_1>E(X)$ so that also 
$$E(e^{\eta h_{E(X)N}(c_1N)})<\infty $$
holds. Now, again, by Lemma \ref{dlemma} we get
\begin{eqnarray*}
E(e^{\eta h_{E(X)N}(S_N)})&=&\sum_{k=1}^\infty E(e^{\eta h_{E(X) N}(S_k)})P(N=k)\\
&\leq & \sum_{k=1}^\infty K(c_\eta)e^{\eta h_{E(X) N}(c_\eta k)} P(N=k)  \\
&=& K(c_1)E(e^{\eta h_{E(X)N}(c_1 N)})<\infty. 
\end{eqnarray*}
The conclusion follows in the same way as in the first part.
\end{enumerate}
\end{proof}

\appendix
\section{Appendices}\label{appendixsec}
Some of the technical details were omitted during the main text. They are presented in this Section.

\subsection{Details for Remark \ref{valiremark}}\label{remark1app}
Let $h_X^*$ be as in part \ref{ehto4} of Lemma \ref{kertauslemma}. If
$$\liminf_{x \to \infty} \frac{h_X^*(x)}{f_1(x)}<1 $$ 
we have that eventually $h_X^*\leq f_1$. In this case setting $h_X=\min(f_1,h_X^*)$ gives the required concave function.

If
$$\liminf_{x \to \infty} \frac{h_X^*(x)}{f_1(x)}\geq 1 $$ 
holds we get using the fact $E(e^{f_1(X)})=\infty$ and part \ref{ehto2} of Lemma \ref{kertauslemma} that
\begin{equation}\label{pitkakaava1}
1\geq \liminf_{x \to \infty} \frac{R_X(x)}{f_1(x)} \geq \left( \liminf_{x \to \infty} \frac{R_X(x)}{h_X^*(x)} \right)\left( \liminf_{x \to \infty} \frac{h_X^*(x)}{f_1(x)} \right)  \geq 1. 
\end{equation}
Formula \eqref{pitkakaava1} implies that $f_1$ must be a natural scale of variable $X$ and thus the function $f_1$ satisfies the requirement of Remark \ref{valiremark}.

\subsection{Light-tailed vs. heavy-tailed: details for Remark \ref{remarkkk}}\label{lhappendix}
We will consider the setting of Theorem \ref{theorem1} with modified assumptions.
\begin{enumerate}
\item Suppose that the variable $N$ is light-tailed and $X$ is heavy-tailed. Then the result of part \ref{kohta1} of Theorem \ref{theorem1} holds when Assumption \eqref{th1eq1} is omitted. The proof of the result simplifies substantially compared to the case where both of the variables are heavy-tailed. 

Firstly, since $N$ is light-tailed, we have that $E(e^{cN})<\infty$ for some $c>0$. Thus, by Lemma \ref{kertauslemma} part \ref{ehto2} and the fact that $h_X(x)=o(x)$:
$$\liminf_{x \to \infty} \frac{R_{c_1N}(x)}{h_X(x)} \geq \left( \liminf_{x \to \infty} \frac{R_{c_1N}(x)}{x}\right) \left( \liminf_{x \to \infty} \frac{x}{h_X(x)}\right)=\infty.$$
This confirms that \eqref{aarpaat} holds and the rest of the proof is similar than in the heavy-tailed case.
\item Suppose that the variable $X$ is light-tailed and $N$ is heavy-tailed. Then a similar modification as in the first part yields the result without Assumption \eqref{th1eq2} of the part \ref{kohta2}.
\end{enumerate}

\subsection{Details for Remark \ref{crossremark}}\label{vigaremark}

We wish to show explicitly that the condition $h_X\geq f$ for some $h_X \in \mathcal{I}(X)$ does not imply that $h_X^*\geq f$ for every $h_X^* \in \mathcal{I}(X)$. Suppose first, for simplicity, that $R_X=R \in \mathcal{I}(X)$. Define $h_X=R$ and let $g$ be any increasing function with $g(x)\to \infty$ as $x \to \infty$ such that $h_X> g$. 

It is now possible to construct the required scale as a piecewise linear function as illustrated in Figure \ref{fig1}. The idea is to iteratively find a line segments whose both endpoints are connected to the graph of $R$, but cross the function $g$. Such segments can always be found since $R(x)=o(x)$, as $x \to \infty$. By selecting the slopes of segments to form a decreasing sequence of numbers ensures that the resulting function is concave. The rest of the required properties are clear from the construction.

\begin{figure}
 \centering
 \def\svgwidth{0.8\columnwidth}
 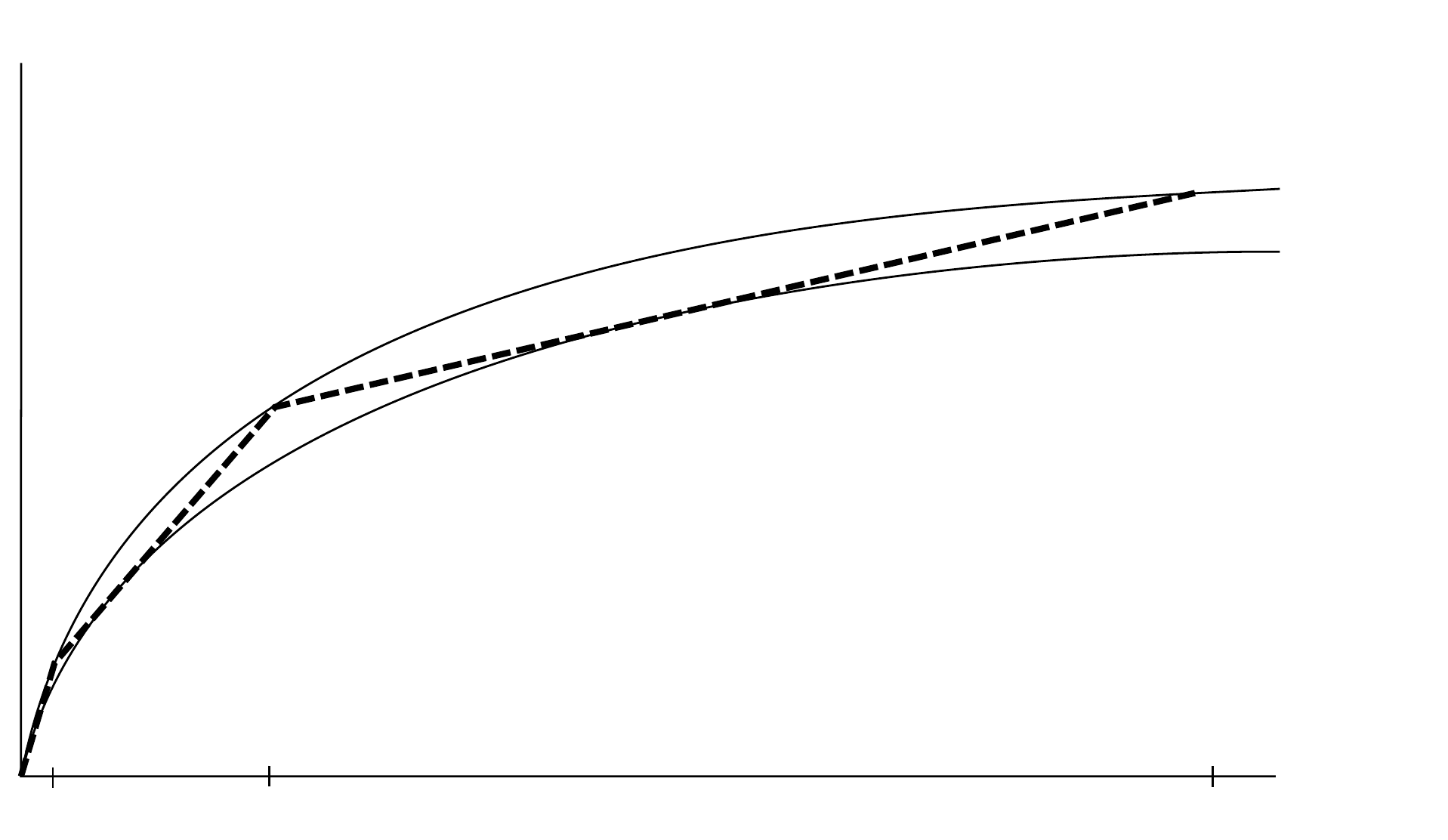
 \caption{The function $h_X$ of Remark \protect\ref{vigaremark} is illustrated in the figure. First three points of the sequence $(x_n)$ are sought. The dashed line is the graph of the function $h_X$ on interval $[0,x_3]$. \label{fig1} }
\end{figure}

\subsection{Details for Example \ref{hyvaesim}}\label{viimremark}
We show that inequality \eqref{kokk} holds. The following estimate is based on a similar deduction presented in the lecture notes \cite{rjatkomon}. Since the notes are in Finnish, the needed argument is fully recalled below. For further analysis of power tailed random walks and their generalisations the reader is advised to see \cite{nyrhinen2005}.

For a fixed $\epsilon>0$ we have for sufficiently large $n$ that
\begin{eqnarray*}
P(S_n>na) &\geq& nP(X_1>n^{1+\epsilon},X_2\leq n^{1+\epsilon},\ldots, X_n\leq n^{1+\epsilon},S_n-X_1>n(E(X)-\epsilon)) \\
&=& n\overline{F}(n^{1+\epsilon})P(X_2\leq n^{1+\epsilon},\ldots, X_n\leq n^{1+\epsilon},S_n-X_1>n(E(X)-\epsilon)) \\
&\sim & n\overline{F}(n^{1+\epsilon}).
\end{eqnarray*}
Hence
\begin{equation}\label{aaaappp}
\liminf_{n \to \infty} \frac{ \log P(S_n>na)}{\log n}\geq 1-(1+\epsilon)\alpha.
\end{equation}
Letting $\epsilon\to 0$ in \eqref{aaaappp} gives \eqref{kokk}.
\section*{Acknowledgements}

The deepest gratitude is expressed to the Finnish Doctoral Programme in Stochastics and Statistics (FDPSS) and the Centre of Excellence in Computational Inference (COIN) for financial support (Academy of Finland grant number 251170). Special thanks are due to Harri Nyrhinen for his diligent guidance throughout the writing of the paper.

\bibliographystyle{acm}
\bibliography{mybib}

\end{document}